\def\PaperDate{2008/06/30}
  \newvariable{\Log}{\operatorname{log}}
  \newvariable{\TheGroup}{G}
  \newvariable{\TheGroupElement}{u}
  \newvariable{\AltGroupElement}{v}
  \newvariable{\TheLength}{m}
  \newvariable{\AltLength}{n}
  \newvariable{\TheExponent}{\delta}
  \newvariable{\AltExponent}{\varepsilon}
  \newvariable{\TheLastIndex}{t}
  \newvariable{\AltLastIndex}{s}
  \newvariable{\TheLhs}{u}
  \newvariable{\TheRhs}{w}
  \newvariable{\TheConjElement}{x}
  \newvariable{\TheNumEq}{r}
  \newvariable{\ThF}{F}
  \newvariable{\ThEffGen}{x}
  \newvariable{\TheNumWords}{r}
  \newvariable{\TheWordLength}{n}
  \newvariable{\TheWidth}{w}
  \newvariable{\TheIndex}{i}
  \newvariable{\AltIndex}{j}
  \newvariable{\TheRank}{n}
  \newvariable{\TheNumStrands}{n}
  \newvariable{\TheHeight}{h}
  \newvariable{\TheProjection}{\pi}
  \newvariable{\TheInjection}{\iota}
  \newvariable{\BrownGeoghegan}{Y}
  \newvariable{\BGcover}{\tilde{\BrownGeoghegan}}
  \newvariable{\bgequiv}{\sim}
  \newvariable{\GenIndex}{n}
  \newvariable{\HighIndex}{m}
  \newvariable{\LowIndex}{q}
  \newvariable{\BV}{BV}
  \newvariable{\BF}{BF}
  \newvariable{\ThEff}{F}
  \newvariable{\ThVee}{V}
  \newvariable{\BraidGroup}{B}
  \newvariable{\TheBvElement}{g}
  \newvariable{\AltBvElement}{g'}
  \newvariable{\TheDiagram}{\Delta}
  \newvariable{\AltDiagram}{\Theta}
  \newvariable{\ThrDiagram}{\Lambda}
  \newvariable{\TheTree}{T}
  \newvariable{\TheBraid}{\beta}
  \newcommand{\Top}{\mathrm{top}}
  \newcommand{\Bot}{\mathrm{bot}}
  \newvariable{\TheWord}{w}
  \newvariable{\EffGen}{\nu}
  \newvariable{\BrdGen}{\bar{\pi}}
  \newvariable{\AltGen}{\pi}
  \newcommand{\BigO}[1]{\Oka\left(#1\right)}
  \newvariable{\TheNormalClosure}{N}
  \newvariable{\TheTrivialElement}{1}
  \newvariable{\Eps}{\varepsilon}
  \newvariable{\AliceSize}{m}
  \newvariable{\AliceLength}{k}
  \newvariable{\AliceElement}{a}
  \newvariable{\AliceKey}{a}
  \newvariable{\AliceIndex}{i}
  \newvariable{\AliceExp}{\delta}
  \newvariable{\BobSize}{n}
  \newvariable{\BobLength}{l}
  \newvariable{\BobElement}{b}
  \newvariable{\BobKey}{b}
  \newvariable{\BobIndex}{j}
  \newvariable{\BobExp}{\varepsilon}
  \newvariable{\TheConjugator}{c}
\begin{document}
  \title{Some Remarks on the Braided Thompson Group $\BV$}
  \author{Kai-Uwe~Bux \and Dmitriy~Sonkin}
  \date{\datum\PaperDate}
  \maketitle
  \begin{abstract}
    M.\,Brin and P.\,Dehornoy independently discovered a braided
    version $\BV$ of R.\,Thompson's group $\ThVee$.
    In this paper, we discuss some properties of
    $\BV$ that might make the group interesting for group
    based cryptography. In particular, we show that $\BV$ does
    not admit a non-trivial linear representation.
  \end{abstract}

  \section{Introduction}\label{sec:introduction}

    One of the ways to visualize elements of R.\,Thompson's group
    $\ThEff$ is to regard them as pairs of
    trees~\cite{Cannon.Floyd.Parry:1996}.
    The trees forming such a pair, called the top tree
    and the bottom
    tree, are finite binary trees with the same number of
    leaves. We follow \cite{Belk.Brown:2005} in drawing the
    top tree with the root at the top and the bottom tree with
    the root at its bottom aligning their leaves to match.
    An element of Thompson's group $\ThVee$ can be understood
    in a similar way: we still have a pair of trees, but now we wedge
    a permutation in between that decides which leaves are considered
    matching.

    The braided version $\BV$ of Thompson's group $\ThVee$ was introduced
    independently by Brin in \cite{Brin:2007}, \cite{Brin:2006} and
    Dehornoy
    in \cite{Dehornoy:2005} and has been investigated
    further by several authors \cite{Brady.Burillo.Cleary.Stein:2008},
    \cite{Burillo.Cleary:2007}. Informally speaking, one obtains an element
    of the braided Thompson's group $\BV$ by using a braid instead
    of a permutation to connect the leaves of the top tree to the
    leaves of the bottom tree. In Section~\ref{sec:complexity},
    we discuss complexity issues of computations
    in $\BV$. In particular,
    we show that multiplication of two elements of $\BV$ given in
    tree-braid-tree form can be carried out in quadratic time on the
    input length. In Section~\ref{sec:representations}, we
    analyze Brin's presentation of $\BV$ to prove the following:
    \begin{NewTh}<statement>{Theorem}
      The group $\BV$ does not admit non-trivial linear representations
      in any characteristic.
    \end{NewTh}

    We note that relatives of $\BV$, namely braid groups and
    Thompson's group $\ThEff$, received some attention recently
    from a cryptographic point of view. Section~\ref{sec:cryptography}
    reflects on the possibility of using the group $\BV$ as a
    platform group in cryptographic protocols.

  \section{The Group \boldmath$\BV$ and
  its Braided Band Diagrams}\label{sec:diagrams}

    Recall that elements of Thompson's group $\ThEff$ can be
    represented by \notion{band diagrams}. A band diagram encodes
    splitting and merging of a band keeping track of the relative order
    of splits and merges. Pictorially, one can think of band
    diagrams as thickened tree diagrams. The following picture shows
    band diagrams for the canonical generators
    $\ThEffGen[\Zero]$ and $\ThEffGen[\One]$:
    \[
      \begin{bv}
        \xs\bvline
        \xl\xs\bvline
        \xm\xdl\bvline
        \xm\bvline
      \end{bv}
      \qquad\qquad\qquad
      \begin{bv}
        \xs\bvline
        \xs\xdr\bvline
        \xl\xs\xdr\bvline
        \xm\xdl\xdl\bvline
        \xm\xdl\bvline
        \xm\bvline
      \end{bv}
    \]
    Two band diagrams are equivalent if one can pass from one to the other
    by means of a finite sequence of moves, where each move applies
    (forward or backward) one of the following relations:
    \begin{eqnarray*}
      \begin{bv}
        \xs\bvline
        \xl\xl\bvline
        \xm\bvline
      \end{bv}\kern-7mm
      &\qquad\longrightarrow\qquad&
      \begin{bv}
        \xl\xsp\bvline
        \xl\bvline
        \xl\bvline
      \end{bv}
      \qquad\qquad\text{(first move: eye removal)}
      \\
      \begin{bv}
        \xm\bvline
        \xl\bvline
        \xs\bvline
      \end{bv}
      &\qquad\longrightarrow\qquad&
      \begin{bv}
        \xl\xl\bvline
        \xl\xl\bvline
        \xl\xl\bvline
      \end{bv}
      \qquad\qquad\text{(second move: joint removal)}
    \end{eqnarray*}
    A band diagram is called \notion{reduced} if neither of the above
    relations can be applied forward. It is well known that every band
    diagram can be reduced by a finite sequence of forward applications
    of the relations and that every equivalence class of band diagrams
    has a unique reduced representative.

    Elements of Thompson's group $\ThEff$ correspond to equivalence
    classes of band diagrams. Multiplication of elements of $\ThEff$
    translates into stacking band diagrams.

    Allowing bands to braid, one arrives at the notion
    of \notion{braided band diagrams}. Those represent elements
    of the group $\BV$. Note that bands are allowed to braid, but
    they are not allowed to twist, i.e., a twisted band segment
    like
    \[
      \begin{array}{c}
        \loadimage{crypt.100}
      \end{array}
    \]
    is not allowed
    in a braided band diagram.

    Also note that we do not distinguish diagrams that just differ
    in the way the braiding is drawn (i.e., the diagrams themselves
    are supposed to live in $3$-space and are regarded equal if they
    differ by an ambient homotopy not twisting bands). E.g., the
    following two pictures describe the same diagram:
    \[
      \begin{bv}
        \xs\bvline
        \xo\bvline
        \xl\xdr\bvline
        \xs\xr\bvline
        \xl\xm\bvline
        \xm\bvline
      \end{bv}
      \qquad
      =
      \qquad
      \begin{bv}
        \xs\bvline
        \xl\xs\bvline
        \xo\xr\bvline
        \xl\xo\bvline
        \xl\xm\bvline
        \xm\bvline
      \end{bv}
    \]

    Again, two diagrams are \notion{equivalent} if there is a
    finite sequence of moves transforming one into the other; and
    we call a diagram \notion{reduced} if it does not allow for a
    forward application of a relation.

    M.\,Brin \cite[Theorem~2]{Brin:2006} has shown that
    $\BV$ is generated by the following elements:
    \[
      \begin{array}{c@{\kern 1cm}c@{\kern 1cm}c@{\kern 1cm}c}
        \begin{bv}
          \xs\bvline
          \xl\xs\bvline
          \xm\xdl\bvline
          \xm\bvline
        \end{bv}
        &
        \begin{bv}
          \xs\bvline
          \xs\xdr\bvline
          \xl\xs\xdr\bvline
          \xm\xdl\xdl\bvline
          \xm\xdl\bvline
          \xm\bvline
        \end{bv}
        &
        \begin{bv}
          \xs\bvline
          \xo\bvline
          \xm\bvline
        \end{bv}
        &
        \begin{bv}
          \xs\bvline
          \xs\xdr\bvline
          \xo\xr\bvline
          \xm\xdl\bvline
          \xm\bvline
        \end{bv}
        \\
        \EffGen[\Zero]
        &
        \EffGen[\One]
        &
        \BrdGen[\Zero]
        &
        \BrdGen[\One]
      \end{array}
    \]

    \begin{prop}
      Every equivalence class of braided band diagrams
      contains a unique reduced representative, and this representative
      can be obtained from any diagram in the equivalence class
      via a finite sequence of forward moves.
    \end{prop}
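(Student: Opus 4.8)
The plan is to treat the reduction process as an abstract rewriting system whose objects are braided band diagrams taken modulo the twist-free ambient homotopy (so that, as the paper stipulates, diagrams differing only in how the braiding is drawn are literally equal), whose forward moves are eye removal and joint removal, and whose normal forms are exactly the reduced diagrams. The goal then follows from Newman's Lemma: a rewriting system in which every sequence of forward moves terminates and which is locally confluent has a unique normal form for each object, reachable from that object by forward moves. Accordingly the argument splits into establishing \emph{termination} and \emph{local confluence}.

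For termination I would assign to each diagram the complexity measure given by its total number of splitting and merging vertices. Inspecting the two relations shows that each forward move replaces a configuration containing one matched split–merge pair by a configuration with two fewer vertices: eye removal deletes a split immediately followed by the merge of the two bands it created, and joint removal deletes a merge immediately followed by the split that reopens the same two bands; in both cases no new vertex is introduced and the surrounding crossings are merely rearranged. Since this measure is a non-negative integer that strictly decreases under every forward move, no infinite forward sequence exists, and in particular every diagram admits a reduced representative reachable by forward moves. This already yields the existence half of the statement.

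The substance of the proof is local confluence: whenever two forward moves apply to a diagram $\TheDiagram$, producing $\AltDiagram$ and $\ThrDiagram$, one must exhibit forward sequences from $\AltDiagram$ and from $\ThrDiagram$ to a common diagram. Following the standard critical-pair method, I would first dispose of the case in which the two redexes occupy disjoint regions, where they commute and the diamond closes in a single step on each side. The overlapping cases form a short finite list, and this is where the braiding enters: an eye or a joint may be entangled with crossings arising from $\BrdGen[\Zero]$ and $\BrdGen[\One]$. The crucial local fact is that, because bands may braid but never twist, any crossing can be slid freely past a split or a merge — this is precisely the ambient-homotopy equivalence, illustrated by the paper's two equal pictures — so each overlap reduces to the corresponding unbraided configuration, whose resolution is supplied by the known confluence of band diagrams for $\ThEff$. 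I expect this critical-pair analysis to be the main obstacle, the delicate point being to verify that sliding crossings past the reduction sites never forces a twist and always yields legitimate braided band diagrams, so that the unbraided resolution lifts faithfully. With termination and local confluence in hand, Newman's Lemma delivers the unique reduced representative of each equivalence class, completing the proof.
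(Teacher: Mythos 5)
Your proposal is correct and follows essentially the same route as the paper: termination via a strictly decreasing integer measure, local confluence by separating disjoint from overlapping redexes, and Newman's Lemma to conclude. The only difference is cosmetic: where you defer the overlapping case to the confluence of unbraided band diagrams for $\ThEff$, the paper resolves it directly by observing that the one genuine overlap is an eye meeting a joint, and removing either yields the same tripod, so the two results are already equal after an ambient homotopy.
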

    \begin{proof*}
      Let $\TheDiagram$ and $\AltDiagram$ be two braided band
      diagrams. We write
      \(
        \TheDiagram\rightarrow\AltDiagram
      \)
      if there is a forward move from $\TheDiagram$ to
      $\AltDiagram$. Since forward moves decrease the number of
      band-segments in a diagram, it follows that
      ``$\rightarrow$'' is a \notion{noetherian} relation,
      i.e., there are no infinite $\rightarrow$\,-chains.

      By Newman's Lemma (a standard result on rewriting systems;
      see, e.g., \cite[Corollary~4.76]{Becker.Weispfenning:1993}),
      it suffices to show that the $\rightarrow$\,-relation
      is \notion{locally confluent}, i.e., given a diagram
      $\TheDiagram$ and two forward moves
      \(
        \TheDiagram\rightarrow\AltDiagram[\One]
      \)
      and
      \(
        \TheDiagram\rightarrow\AltDiagram[\Two]
        ,
      \)
      there exists a diagram $\ThrDiagram$ that
      can be obtained by forward move sequences from both
      $\AltDiagram[\One]$ and $\AltDiagram[\Two]$.

      The local confluence condition, however, is easily verified
      in our setting:
      \begin{enumerate}
        \item
          Any two forward moves removing eyes (joints) can be
          performed in any order since the two eyes (joints) do
          not interfere with each other.
        \item
          Given two forward moves of different type, either they
          can be performed in any order, or they lead to equal
          diagrams (possibly after a suitable ambient
          homotopy). The latter happens when an eye meets a
          joint (removing either of them yields a tripod).

          In the following example, we either
          delete the top-eye or the following joint and
          obtain identical diagrams (i.e., diagrams that are equal
          after a suitable ambient homotopy):
          \[
            \begin{bv}
              \xs\bvline
              \xl\xs\bvline
              \xo\xr\bvline
              \xl\xo\bvline
              \xm\xr\bvline
              \xl\xr\bvline
              \xs\xr\bvline
              \xl\xm\bvline
              \xm\bvline
            \end{bv}
            \longrightarrow
            \begin{bv}
              \xs\bvline
              \xo\bvline
              \xl\xdr\bvline
              \xs\xr\bvline
              \xl\xm\bvline
              \xm\bvline
            \end{bv}
            =
            \begin{bv}
              \xs\bvline
              \xl\xs\bvline
              \xo\xr\bvline
              \xl\xo\bvline
              \xl\xm\bvline
              \xm\bvline
            \end{bv}
            \qed
          \]
      \end{enumerate}
    \end{proof*}

    \begin{rem}
      Note that, as a corollary, we recover the result of
      M.\,Brin \cite[Lemma~4.3]{Brin:2006} that $\BV$ contains
      a copy of $\ThEff$ realized as the set of reduced diagrams that
      do not exhibit braiding.
    \end{rem}

  \section{Complexity of the Word
  Problem}\label{sec:complexity}

    We want to devise an efficient method for computing products in
    $\BV$. To do so, we have to establish a canonical method of
    representing elements of $\BV$ in a way suitable for
    computations. Braided band diagrams will serve as our starting
    point.

    Let us call a diagram \notion{semi-reduced} if it does not
    admit joint-removal moves. Obviously, every
    reduced diagram is semi-reduced. Moreover, every semi-reduced
    diagram can be transformed into a reduced diagram via
    a (finite) sequence of eye-removal moves.

    \begin{observation}
      A diagram $\TheDiagram$ is semi-reduced if and only if, along
      each route from top to bottom in $\TheDiagram$, we never find a merge
      of bands followed by a split of the band.\qed
    \end{observation}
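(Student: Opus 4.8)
The plan is to identify the local configuration on which a joint-removal move acts and then match it against the pattern described along routes. A joint-removal move applies to $\TheDiagram$ precisely when $\TheDiagram$ contains a \emph{joint}: a merge node whose single outgoing band segment feeds directly into a split node. The second relation, applied forward, deletes this joint, replacing the two-in/two-out configuration by two non-interacting bands. Thus $\TheDiagram$ is semi-reduced if and only if it contains no joint, and the whole statement reduces to showing that ``$\TheDiagram$ has a joint'' and ``some route of $\TheDiagram$ exhibits a merge followed by a split'' are equivalent.

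First I would dispatch the easy implication. If $\TheDiagram$ contains a joint, then the route that enters the merge node, traverses the connecting segment, and leaves through the split node exhibits a merge immediately followed by a split of the resulting band. Contrapositively, if no route ever shows a merge followed by a split, then $\TheDiagram$ has no joint and is therefore semi-reduced.

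For the converse I would argue that any occurrence of the merge-then-split pattern along a route already forces a joint. Suppose a route passes through a merge node $M$ and, somewhere below it, through a split node $S$. Following the route downward from $M$, let $S'$ be the first split encountered; then every node strictly between $M$ and $S'$ along the route is itself a merge. In particular, the band segment feeding directly into $S'$ is the output of a merge node $M'$, so the pair $M',S'$ constitutes a joint and a joint-removal move applies. Hence a semi-reduced diagram admits no merge-then-split pattern on any route.

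The main obstacle is exactly this last reduction: the statement speaks of a merge \emph{followed by} a split, which a priori permits intervening nodes, whereas a joint-removal move requires the merge and the split to be adjacent. The crux is the observation that, along a single route, the only events that can separate a merge from a later split are further merges, so passing to the first split beyond the merge always exposes an adjacent merge--split pair.
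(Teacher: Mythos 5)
Your argument is correct and is exactly the justification the paper leaves implicit: the observation is stated with no proof at all, and your reduction of an arbitrary merge-then-split occurrence along a route to an \emph{adjacent} merge--split pair (by passing to the first split below the merge, so that all intervening nodes are merges) is the one point that needed spelling out. The only detail you gloss over is that the band segment joining that last merge to the first split may still pass through braid crossings; since the paper identifies diagrams that differ only in how the braiding is drawn (ambient isotopy without twisting), such crossings can be slid past the split and do not obstruct the joint-removal move, so this is harmless under the paper's conventions.
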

    \begin{observation}
      Consider a semi-reduced braided band diagram $\TheDiagram$. We
      can isotop the diagram so that all the splits precede any
      braiding and all the merges occur after all the braiding is
      done:
      \[
        \begin{bv}
          \xs\bvline
          \xo\bvline
          \xl\xdr\bvline
          \xs\xr\bvline
          \xl\xm\bvline
          \xm\bvline
        \end{bv}
        \qquad\longrightarrow\qquad
        \begin{bv}
          \xs\bvline
          \xl\xs\bvline
          \xo\xr\bvline
          \xl\xo\bvline
          \xl\xm\bvline
          \xm\bvline
        \end{bv}
      \]
      Thus, a semi-reduced diagram always decomposes into three
      layers: the top-part that is a root-at-the-top tree where all
      the splits of the band occur; the middle part consisting of
      a braid of bands; the bottom part which is a root-at-the-bottom
      tree where the bands are merged back into a single ribbon.

      Consequently, every element of $\BV$ can be represented by
      a triple
      \(
        \TupelOf{\TheTree[][\Top],\TheBraid,\TheTree[][\Bot]}
        ,
      \)
      consisting of two planar trees $\TheTree[][\Top]$ and
      $\TheTree[][\Bot]$ and a braid $\TheBraid$ interpolating
      between the leaves of the trees.\qed
    \end{observation}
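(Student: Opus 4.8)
The plan is to sort the elementary events of the diagram by type, pushing every band-split upward past the braiding and every band-merge downward past it, until the three layers separate cleanly. First I would put the diagram in generic position with respect to the top-to-bottom height function, so that splits, merges, and braid-crossings occur at pairwise distinct heights. The decomposition then follows once I show that a split can always be slid above an adjacent crossing and that a merge can always be slid below one, together with the fact that the only configuration obstructing such a sort --- a merge sitting above a split along a common route --- is exactly a joint, and so is excluded in a semi-reduced diagram by the preceding observation.

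The first step is to establish the two basic local isotopies. Suppose a band $A$ crosses (over or under) a band $B$ and then splits into $A_1,A_2$. Since braiding does not twist bands, $A$ passes entirely over (respectively under) $B$, so I can slide the split point up through the crossing region: the result is that $A$ splits first and the two parallel sub-bands $A_1,A_2$ then cross $B$ together, with no twist introduced. This is precisely the move illustrated in the statement, and it realizes ``split-past-crossing''. Turning the diagram upside down interchanges splits and merges and yields the dual ``merge-past-crossing'' move, sliding a merge below an adjacent crossing. Events supported on disjoint bands commute freely, and the relative order of two splits (or two merges, or two crossings) never needs to change.

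Next I would run the sorting argument for splits. Repeatedly pick a split $S$ that still lies below some crossing or merge, and inspect the event feeding its incoming band from directly above. If that event is a crossing, apply split-past-crossing; if it is on a disjoint band, commute the two; if it is another split, both already belong to the top layer and I leave their order alone. The remaining case would be a merge whose output feeds $S$ --- but that is a merge followed by a split along a route, i.e.\ a joint, which cannot occur by the preceding observation. Hence every split can be raised above all crossings and merges, and a standard monotonicity argument (each move strictly reduces the number of crossings and merges situated above some split) guarantees termination. By the symmetric pass every merge descends below all crossings, the obstructing case again being exactly a joint; and since the splits are by now all above the merges, this pass meets no joint and no split in its way.

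After both passes the diagram has all splits at the top, all crossings in the middle, and all merges at the bottom. The splits assemble into a planar binary tree $\TheTree[][\Top]$ rooted at the single top band; dually the merges form a planar tree $\TheTree[][\Bot]$ rooted at the bottom; and between them the strand count is constant, so the crossings constitute a braid $\TheBraid$ on that many strands. This yields the triple $\TupelOf{\TheTree[][\Top],\TheBraid,\TheTree[][\Bot]}$, the trees $\TheTree[][\Top]$ and $\TheTree[][\Bot]$ having equally many leaves, matched by the strands of $\TheBraid$. The step demanding the most care is checking that sliding a split or merge through a crossing introduces no twist: here I would lean on the defining restriction that braided band diagrams allow crossings but forbid twisted band-segments, so that one band crosses another as an untwisted ribbon and the split point can be transported through freely.
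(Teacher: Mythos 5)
Your argument is correct and is essentially the paper's own: the paper records this statement as an observation, offering only the displayed picture of the split-past-crossing isotopy as justification, and your sorting procedure (generic position, the two local slides, exclusion of the merge-before-split configuration by semi-reducedness, and a decreasing measure for termination) is precisely the careful version of that idea. In particular, your identification of the forbidden joint as the only possible obstruction is exactly the role the preceding observation plays in the paper's reasoning.
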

    \begin{observation}
      Conversely, given a triple
      \(
        \TupelOf{\TheTree[][\Top],\TheBraid,\TheTree[][\Bot]}
      \)
      as above, we can form a braided band diagram by stacking the
      top tree on the top of the braid and appending an
      upside-down drawing of the bottom tree. Within such a diagram,
      along each ribbon we find no merge followed by a split,
      i.e., the diagram is semi-reduced.\qed
    \end{observation}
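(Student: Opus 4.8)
The plan is to exploit the characterization of semi-reduced diagrams established in the first observation: a diagram is semi-reduced precisely when, along no route from top to bottom, a merge of bands is followed by a split. I would therefore analyze the three-slab structure of the diagram assembled from the triple $\TupelOf{\TheTree[][\Top],\TheBraid,\TheTree[][\Bot]}$ and show that, along every route, all splits occur strictly above all merges, so that the forbidden merge-then-split pattern can never appear.

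First I would make the construction precise. Reading from top to bottom, the resulting diagram $\TheDiagram$ consists of three consecutive horizontal slabs. The top slab is the band-diagram realization of the planar tree $\TheTree[][\Top]$ drawn with its root at the top: descending from the root toward the leaves, a single incoming ribbon is repeatedly \emph{split}, and no merges occur. The middle slab is the braid $\TheBraid$ of bands; since a braid neither creates nor destroys strands, this slab contains no splits and no merges, only braiding. The bottom slab is the upside-down drawing of $\TheTree[][\Bot]$, that is, $\TheTree[][\Bot]$ with its root at the bottom; traversing it downward, the bands are repeatedly \emph{merged} on the way to the single root, and no splits occur.

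Next I would follow an arbitrary route from top to bottom. Because the three slabs are stacked in the fixed order top-tree, braid, bottom-tree, the route meets them in exactly that order: it first traverses a (possibly empty) portion of the top slab, then the braid, then a portion of the bottom slab. By the previous paragraph, every split encountered on the route lies in the top slab and every merge lies in the bottom slab; in particular every split along the route precedes every merge. Hence the route can never exhibit a merge immediately followed by a split.

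The only point requiring care—and the main bookkeeping obstacle—is to confirm that a route genuinely respects the slab ordering and cannot, say, re-enter the top slab after descending into the braid. This is immediate once one notes that routes run monotonically downward and the three slabs occupy disjoint horizontal regions, so once a route leaves a slab it never returns. With this in hand, no route contains a merge followed by a split, and so, by the characterization of semi-reduced diagrams recorded in the first observation, the diagram $\TheDiagram$ is semi-reduced, as claimed.
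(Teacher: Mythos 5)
Your proof is correct and follows exactly the reasoning the paper has in mind: the paper states this as an observation with no written proof precisely because the stacked three-layer structure (splits only in the top tree, neither splits nor merges in the braid, merges only in the bottom tree) makes the absence of a merge-followed-by-split pattern immediate. Your write-up simply makes that implicit argument explicit, including the appeal to the characterization of semi-reduced diagrams in the first observation.
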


    We shall now discuss how to detect removable eyes. Let the
    triple
    \(
      \TupelOf{\TheTree[][\Top],\TheBraid,\TheTree[][\Bot]}
    \)
    represent a semi-reduced diagram. Assuming that the braid
    $\TheBraid$ is an element of the braid
    group $\BraidGroup[\TheRank]$, where $\TheRank$ is the number
    of leaves of either tree, let
    \[
      \TheProjection[\TheIndex] \mapcolon
      \BraidGroup[\TheRank] \longrightarrow \BraidGroup[\TheRank-\One]
    \]
    be the map defined by deleting the $\TheIndex^{\text{th}}$ strand
    (strands are indexed at the top of the braid); and let
    \[
      \TheInjection[\TheIndex] \mapcolon
      \BraidGroup[\TheRank-\One] \longrightarrow \BraidGroup[\TheRank]
    \]
    be the map defined by doubling the $\TheIndex^{\text{th}}$ strand
    (i.e., splitting that strand into two all the way from the top to
    the bottom of the braid).

    \begin{observation}
      Let $\TheBraid\in\BraidGroup[\TheRank]$ be a braid. The
      $\TheIndex^{\text{th}}$ and
      $(\TheIndex+1)^{\text{st}}$
      strands are parallel, i.e., can be united into a single strand without
      otherwise disrupting the braid $\TheBraid$, if and only if
      \(
        \TheInjectionOf[\TheIndex]{
          \TheProjectionOf[\TheIndex]{\TheBraid}
        }
        =
        \TheBraid
        .
      \)\qed
    \end{observation}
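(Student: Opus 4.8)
The plan is to recognize the composite $\TheInjection[\TheIndex]\circ\TheProjection[\TheIndex]$ as an idempotent self-map of $\BraidGroup[\TheRank]$ and to identify its fixed points with the image of the doubling map $\TheInjection[\TheIndex]$. First I would record the geometric fact that deleting a strand just created by doubling undoes the doubling; concretely, $\TheProjection[\TheIndex]\circ\TheInjection[\TheIndex]=\operatorname{id}$ on $\BraidGroup[\TheRank-\One]$. Indeed, $\TheInjection[\TheIndex]$ replaces the $\TheIndex^{\text{th}}$ strand by two parallel copies occupying positions $\TheIndex$ and $\TheIndex+\One$, and $\TheProjection[\TheIndex]$ then removes the first of these two copies, leaving an isotopic copy of the original strand while affecting nothing else up to isotopy.

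From $\TheProjection[\TheIndex]\circ\TheInjection[\TheIndex]=\operatorname{id}$ it follows formally that $e:=\TheInjection[\TheIndex]\circ\TheProjection[\TheIndex]$ satisfies $e\circ e=e$, so $e$ is idempotent, and that $\TheProjection[\TheIndex]$ is surjective. For any idempotent self-map of a set the fixed-point set coincides with the image, and here the image of $e$ equals the image of $\TheInjection[\TheIndex]$ (using surjectivity of $\TheProjection[\TheIndex]$). Hence $\TheInjectionOf[\TheIndex]{\TheProjectionOf[\TheIndex]{\TheBraid}}=\TheBraid$ holds if and only if $\TheBraid$ lies in the image of the doubling map $\TheInjection[\TheIndex]$.

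It then remains to show that the image of $\TheInjection[\TheIndex]$ is exactly the set of braids whose $\TheIndex^{\text{th}}$ and $(\TheIndex+\One)^{\text{st}}$ strands are parallel. The inclusion from left to right is immediate from the definition of doubling: the two strands produced occupy adjacent positions and bound an embedded band, hence are parallel. For the reverse inclusion, if the two strands of $\TheBraid$ are parallel then by definition they can be united into a single strand without disrupting the rest of the braid; calling the resulting element of $\BraidGroup[\TheRank-\One]$ the braid $\gamma$, splitting that strand apart again recovers $\TheBraid$, exhibiting $\TheBraid=\TheInjectionOf[\TheIndex]{\gamma}$. Combining the two reductions yields the claimed equivalence.

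The routine algebra (idempotency, image equals fixed set) is trivial; the only place demanding care is the geometric bookkeeping: checking that doubling and deletion are well defined on isotopy classes, and that ``parallel'' is correctly captured by ``bounds an embedded band,'' so that merging and re-splitting are mutually inverse up to isotopy. I expect this strand-tracking, and in particular keeping the index shift straight (after deleting strand $\TheIndex$, the former strand $\TheIndex+\One$ becomes the new strand $\TheIndex$), to be the main, if modest, obstacle.
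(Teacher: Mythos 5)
Your proposal is correct. Note, however, that the paper offers no argument at all for this statement: it is an Observation whose \qed signals that the authors regard it as immediate from the definitions of the strand-deletion map $\pi_i$ and the strand-doubling map $\iota_i$. Your write-up supplies exactly the content they take for granted, and organizes it nicely: the identity $\pi_i\circ\iota_i=\operatorname{id}$ on $B_{n-1}$ makes $e=\iota_i\circ\pi_i$ an idempotent self-map of $B_n$, so its fixed set equals its image, which (by surjectivity of $\pi_i$) equals the image of $\iota_i$; and the image of $\iota_i$ is precisely the set of braids whose $i^{\text{th}}$ and $(i+1)^{\text{st}}$ strands are parallel, by the very definition of doubling in one direction and of ``parallel'' (merge, then re-split) in the other. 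Your closing caution about the index shift --- after deleting strand $i$, the former strand $i+1$ occupies position $i$, so doubling at position $i$ recreates the pair at positions $i$ and $i+1$ --- is indeed the only bookkeeping point where one could slip, and you handle it correctly. In short: a correct, slightly more formal proof of a fact the paper asserts without proof.
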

    \begin{observation}\label{obs:eye_removal}
      A semi-reduced diagram represented as a triple
      \(
        \TupelOf{\TheTree[][\Top],\TheBraid,\TheTree[][\Bot]}
      \)
      can be further reduced if and only if there is a pair of
      parallel strands in $\TheBraid$ that connects a terminal
      caret in $\TheTree[][\Top]$ to a terminal caret in
      $\TheTree[][\Bot]$. Here, a
      \notion{terminal caret}
      in $\TheTree[][\Top]$ is a split of a band such that
      along both resulting bands there are no further splits.
      Symmetrically, a terminal caret in $\TheTree[][\Bot]$ is a
      merge of two bands both of which had not previously been
      involved in merges.\qed
    \end{observation}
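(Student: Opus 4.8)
The plan is to prove the two implications separately, relying on the three-layer (tree--braid--tree) normal form from the preceding observations and on the characterization of parallel strands by the identity $\TheInjectionOf[\TheIndex]{\TheProjectionOf[\TheIndex]{\TheBraid}} = \TheBraid$. Identify the leaves of $\TheTree[][\Top]$ with the top endpoints $1,\dots,\TheRank$ of $\TheBraid\in\BraidGroup[\TheRank]$ and the leaves of $\TheTree[][\Bot]$ with its bottom endpoints. Since the diagram is semi-reduced, no joint-removal move is available, so ``can be further reduced'' means exactly that the diagram admits an eye-removal move, i.e.\ that it contains an \emph{eye}: a split whose two resulting bands run parallel through the braid and then merge directly back into a single band.

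For $(\Leftarrow)$, suppose a terminal caret of $\TheTree[][\Top]$---splitting one band into the two adjacent strands $\TheIndex$ and $\TheIndex+1$---is joined by a pair of parallel strands to a terminal caret of $\TheTree[][\Bot]$. By the preceding observation the parallel pair satisfies $\TheInjectionOf[\TheIndex]{\TheProjectionOf[\TheIndex]{\TheBraid}} = \TheBraid$, so the two strands can be united into a single strand; in particular they neither cross each other nor become separated, and they arrive in order at the two adjacent inputs of the bottom terminal caret. Uniting them collapses the split and the merge simultaneously, which is precisely an application of the eye-removal relation; hence the diagram is not reduced.

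For $(\Rightarrow)$, suppose an eye-removal move applies, so the diagram contains an eye bounded by two band segments running from one split to one merge. In the normal form the split belongs to $\TheTree[][\Top]$ and the merge to $\TheTree[][\Bot]$. Since the two bounding segments merge directly back together, neither splits again before the merge, so the split is a terminal caret of $\TheTree[][\Top]$; symmetrically, since both segments issue from a common split, neither is involved in an earlier merge, so the merge is a terminal caret of $\TheTree[][\Bot]$. Because the two segments run parallel through $\TheBraid$, the preceding observation identifies them as a pair of parallel strands, and they join the top terminal caret to the bottom terminal caret, as required.

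I expect the only genuine work to lie in the $(\Leftarrow)$ direction: one must verify that ``parallel,'' formalized as $\TheInjectionOf[\TheIndex]{\TheProjectionOf[\TheIndex]{\TheBraid}} = \TheBraid$, indeed forces the two strands to stay adjacent and order-preserving all the way down, so that the split--parallel--merge pattern really matches the local picture of the eye-removal relation rather than straddling unrelated strands. Once this adjacency is secured the collapse is immediate, and the reverse implication is little more than a direct reading of the normal form.
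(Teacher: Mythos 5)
Your proof is correct, and it matches the paper exactly in spirit: the paper states this as an Observation closed off without any argument at all, treating it as immediate from the three-layer decomposition and the parallel-strand criterion $\TheInjectionOf[\TheIndex]{\TheProjectionOf[\TheIndex]{\TheBraid}}=\TheBraid$ established in the preceding observations. Your two implications simply make that implicit reasoning explicit, so there is no divergence to report.
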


    We can use this to reduce diagrams algorithmically.
    \begin{NewTh}<definition>[Satz]{Algorithm}\label{alg:reduce}
      A triple
      \(
        \TupelOf{
          \TheTree[\One][\Top],
          \TheBraid[\One],
          \TheTree[\One][\Bot]
        }
      \)
      can be reduced by applying a sequence of
      eye-removal moves according to Observation~\ref{obs:eye_removal}.
      The process can be organized as follows:
      \begin{enumerate}
        \item
          Find the left-most terminal caret of the top tree.
        \item
          Check whether the strands issuing from this caret are parallel.
          If so, check whether they lead to a terminal caret in the
          bottom tree. If so, remove the eye and check if there
          is a terminal caret in the current position
          (in the top tree). Repeat this step, if there is one.
        \item
          Move to the right and repeat the previous step on the next
          terminal caret in the top tree.
        \item
          Repeat until all terminal carets of the top tree have
          been visited.
      \end{enumerate}
      In this algorithm, we can proceed from the left to
      the right since an eye-removal cannot create terminal carets
      in the top tree to the left of the caret that is being removed.
    \end{NewTh}
    Checking whether two triples represent the same group element
    in $\BV$ can be performed according to the following:
    \begin{NewTh}<definition>[Satz]{Algorithm}\label{alg:test_equal}
      Given two triples
      \(
        \TupelOf{\TheTree[\One][\Top],\TheBraid[\One],\TheTree[\One][\Bot]}
      \)
      and
      \(
        \TupelOf{\TheTree[\Two][\Top],\TheBraid[\Two],\TheTree[\Two][\Bot]}
        ,
      \)
      perform a sequence of eye-removal moves on either of them until
      both cannot be further reduced. The triples thus obtained represent the same group element
      if and only if they have the same top and bottom trees and the
      braids are equal as elements of the corresponding braid group.
    \end{NewTh}

    Multiplication also has a natural interpretation in terms of
    diagrams:
    \begin{observation}\label{obs:product}
      If two elements $\TheBvElement[\One]$ and $\TheBvElement[\Two]$
      are represented by triples
      \(
        \TupelOf{\TheTree[\One][\Top],\TheBraid[\One],\TheTree[\One][\Bot]}
      \)
      and
      \(
        \TupelOf{\TheTree[\Two][\Top],\TheBraid[\Two],\TheTree[\Two][\Bot]}
      \)
      where $\TheTree[\Two][\Top]=\TheTree[\One][\Bot]$, then the
      triple
      \(
        \TupelOf{
          \TheTree[\One][\Top],
          \TheBraid[\One]\TheBraid[\Two],
          \TheTree[\Two][\Bot]
        }
      \)
      represents the product $\TheBvElement[\One]\TheBvElement[\Two]$.\qed
    \end{observation}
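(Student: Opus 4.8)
The plan is to work entirely with braided band diagrams and to invoke the fact, recalled in Section~\ref{sec:diagrams}, that the group operation in $\BV$ is realized by stacking diagrams. First I would replace each triple by its associated semi-reduced diagram, built as in the preceding observations by stacking the top tree, then the braid, then the upside-down bottom tree that merges the $\TheRank$ strands back into a single ribbon. Placing the diagram of $\TheBvElement[\One]$ directly above that of $\TheBvElement[\Two]$ yields a diagram representing the product $\TheBvElement[\One]\TheBvElement[\Two]$; its central region consists of the merges of $\TheTree[\One][\Bot]$ immediately followed by the splits of $\TheTree[\Two][\Top]$. This central region carries no braiding --- all braiding lives in $\TheBraid[\One]$ above it and in $\TheBraid[\Two]$ below it --- so the joint-removal move applies there without any obstruction from the braiding.

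The heart of the argument is the hypothesis $\TheTree[\Two][\Top]=\TheTree[\One][\Bot]$. Reading down the central region one meets, at the junction, a merge of two bands into the single root ribbon immediately followed by a split of that ribbon into two bands: precisely a joint, to which the second move (joint removal) applies. Because the two trees share the same underlying binary structure, the left band leaving the root merge lines up with the left band entering the root split, and symmetrically on the right. I would then induct on the number of carets: after removing the root joint, the root merges of the two (equal) left subtrees form a new joint, and likewise for the right subtrees, so the cancellation propagates recursively from the root toward the leaves. When it terminates, nothing of the two trees survives but $\TheRank$ parallel bands running straight from the bottom of $\TheBraid[\One]$ to the top of $\TheBraid[\Two]$.

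With both trees gone, $\TheBraid[\One]$ and $\TheBraid[\Two]$ now sit one directly above the other, joined by parallel bands, so they compose to the single braid $\TheBraid[\One]\TheBraid[\Two]$ in $\BraidGroup[\TheRank]$; what remains is exactly the diagram of the triple $\TupelOf{\TheTree[\One][\Top],\TheBraid[\One]\TheBraid[\Two],\TheTree[\Two][\Bot]}$. Since joint removals are permitted moves, this diagram is equivalent to the stacked one and hence represents the same element $\TheBvElement[\One]\TheBvElement[\Two]$, as claimed. The step that needs the most care is the bookkeeping of the strand correspondence: I must check that the left-to-right ordering of the $\TheRank$ strands at the bottom of $\TheBraid[\One]$ matches the ordering at the top of $\TheBraid[\Two]$, so that the composite is genuinely $\TheBraid[\One]\TheBraid[\Two]$ and not some re-indexed variant. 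This is exactly what $\TheTree[\Two][\Top]=\TheTree[\One][\Bot]$ guarantees, since a planar tree induces a well-defined ordering of its leaves, and the recursive joint removal preserves this matching at every stage.
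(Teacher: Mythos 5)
Your proposal is correct and follows exactly the route the paper has in mind: the paper states this as an observation with no written proof, treating it as immediate from the diagram calculus, and your argument --- stack the two semi-reduced diagrams, cancel the matching middle trees by recursive joint-removal moves from the root outward, and note that planarity plus $\TheTree[\Two][\Top]=\TheTree[\One][\Bot]$ makes the strand orderings match so the remaining braids compose to $\TheBraid[\One]\TheBraid[\Two]$ --- is precisely the justification left implicit. Your attention to the strand-correspondence bookkeeping is a worthwhile addition, not a deviation.
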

    Consequently, multiplication in $\BV$ can be carried out
    using the following:
    \begin{NewTh}<definition>[Satz]{Algorithm}\label{alg:product}
      Given two elements
      \(
        \TheBvElement[\One],
        \TheBvElement[\Two]
        \in
        \BV,
      \)
      represented by semi-reduced triples
      \(
        \TupelOf{\TheTree[\One][\Top],\TheBraid[\One],\TheTree[\One][\Bot]}
      \)
      and
      \(
        \TupelOf{\TheTree[\Two][\Top],\TheBraid[\Two],\TheTree[\Two][\Bot]}
        ,
      \)
      compute a semi-reduced triple for the product
      \(
        \TheBvElement[\One]\TheBvElement[\Two]
      \)
      as follows: first unreduce both factors so that the bottom
      tree of the left-hand factor matches the top tree of the
      right-hand factor; then form a triple for the product using
      Observation~\ref{obs:product}. Note that the resulting
      triple is automatically semi-reduced.
    \end{NewTh}

    So far, we have ignored complexity issues and we have taken
    operations on braids and trees for granted. Since braid operations
    dominate the time complexity of all algorithms, we will not
    discuss the complexity of operations on trees.

    To meaningfully discuss the time complexity of the
    algorithms above, we need to
    settle on a representation of the braid component of a
    triple. The braid is an element of the braid group
    $\BraidGroup[\TheRank]$ where the number $\TheRank$
    of strands is determined by the tree components of the triple.
    A natural way to represent elements of $\BraidGroup[\TheRank]$
    is as words over some fixed generating set. We will be using
    the set of non-repeating braids (also called the
    Garside generators). For this set of generators,
    W.\,Thurston has given a solution to the word problem in braid
    groups \cite[Chapter~9]{Epstein_et_al}.

    Recall that a braid $\TheBraid\in\BraidGroup[\TheRank]$ is \notion{positive}
    if it can be drawn so that all crossings are overcrossings
    (the down-right strand goes over the
    down-left strand).
    A positive braid is called \notion{non-repeating} if any
    pair of strands crosses at most once. By
    \cite[Lemma~9.1.10]{Epstein_et_al}, non-repeating braids of
    $\BraidGroup[\TheRank]$ are uniquely
    determined by the permutation they induce; and for each
    permutation, there is a non-repeating braid. Thus,
    non-repeating braids form a generating set for $\BraidGroup[\TheRank]$
    whose elements can be represented by permutations on $\TheRank$
    letters.

    The following observation makes the set of non-repeating
    braids convenient for our purposes:
    \begin{observation}\label{obs:cap_word_length}
      Neither doubling a strand nor deleting a strand creates
      undercrossings out of nowhere. Also, both operations do not
      increase the number of crossings of any given pair of strands.
      Thus, if $\TheBraid$ is a non-repeating braid, then so are
      \(
        \TheProjectionOf[\TheIndex]{\TheBraid}
      \)
      and
      \(
        \TheInjectionOf[\TheIndex]{\TheBraid}
      \)
      for any $\TheIndex$.

      It follows that the operations of deleting and doubling strands
      do not increase the word length with respect to the generating
      set of non-repeating braids.
    \end{observation}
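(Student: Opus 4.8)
The plan is to reduce the statement to two elementary facts about braid diagrams and then propagate them through products. First I would treat the two displayed claims geometrically. Realise deletion of the $\TheIndex^{\text{th}}$ strand as erasure of that strand from a diagram for $\TheBraid$: every crossing that does not involve the strand survives verbatim, and every crossing that does involve it simply vanishes. No crossing therefore changes sign and none is created, so undercrossings are not manufactured; moreover the number of crossings between any two surviving strands is literally unchanged, since the way two arcs wind around each other is unaffected by erasing a third. This is exactly the content of the two claims for $\TheProjection[\TheIndex]$.

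For doubling I would replace the $\TheIndex^{\text{th}}$ strand by a parallel pair of strands running inside a thin tubular neighbourhood of the original. Each crossing of the original strand with a strand $\AltIndex$ then becomes a pair of crossings of the two copies with $\AltIndex$, and because the copies run parallel these have the same sign as the original crossing; the two copies themselves never cross. Hence no undercrossing appears, the crossing number of any pair untouched by the doubling is preserved, the crossing number of a copy with any $\AltIndex$ equals that of the original strand with $\AltIndex$, and the pair of copies crosses zero times --- the two claims for $\TheInjection[\TheIndex]$.

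Granting these, the assertion that $\TheProjectionOf[\TheIndex]{\TheBraid}$ and $\TheInjectionOf[\TheIndex]{\TheBraid}$ are again non-repeating is immediate: if $\TheBraid$ is positive with every pair of strands crossing at most once, then the image is still positive (no undercrossing was created and no sign was flipped) and still has every pair crossing at most once (no crossing number was increased, and the only pair freshly introduced by doubling, namely the two copies, does not cross at all).

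Finally, for the word-length statement I would combine this with the compatibility of both operations with stacking of diagrams. Tracing the strand to be modified through a factorisation $\TheBraid=\TheBraid'\TheBraid''$, it issues from top-position $\TheIndex$, leaves $\TheBraid'$ at some position $\AltIndex$, and enters $\TheBraid''$ there; deleting (respectively doubling) it globally agrees with performing the operation at position $\TheIndex$ in $\TheBraid'$, performing it at position $\AltIndex$ in $\TheBraid''$, and stacking the results. Iterating over a shortest factorisation $\TheBraid=\TheBraid[\One]\cdots\TheBraid[\TheLength]$ into non-repeating braids, the image becomes a product of $\TheLength$ braids, each the image of a non-repeating factor and hence itself non-repeating by the previous paragraph; factors that collapse to the trivial braid may be discarded, which only shortens the product. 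Thus the word length with respect to the non-repeating generators does not increase. The parts about crossing diagrams are visually transparent, so I expect the only delicate point to be this last step: verifying the compatibility with stacking together with the bookkeeping of strand positions at the interface between factors --- and, in the doubling case, the change in strand count there --- since it is precisely this that turns the per-factor preservation of the non-repeating property into a genuine bound on the total word length.
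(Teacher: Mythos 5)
Your proposal is correct and takes essentially the same route as the paper: the observation there is justified by exactly the two geometric facts you verify (deleting or doubling a strand creates no undercrossings and increases no pair's crossing number), from which the preservation of the non-repeating property is immediate, and the word-length bound is obtained by applying the operation factor-by-factor to a shortest word in non-repeating generators. Your explicit check of compatibility with stacking, including the bookkeeping of the strand's position at the interface between factors and the discarding of factors that become trivial, merely fills in details the paper treats as self-evident.
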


    We also note that non-repeating braids can be manipulated efficiently:
    the operations of doubling a strand or deleting a strand in a
    generator are linear in the length of the input and, therefore,
    take time $\BigO{\TheRank\LogOf{\TheRank}}$ in the case of
    a non-repeating braid of $\BraidGroup[\TheRank]$.

    For the generating set of non-repeating braids,
    Thurston defines the right-greedy and the left-greedy normal
    forms, which are unique and can be efficiently
    computed:
    \begin{lemma}[{\cite[Corollary~9.5.3]{Epstein_et_al}}]
      Let a braid $\TheBraid\in\BraidGroup[\TheRank]$ be a word
      of length $\TheHeight$ with respect to the generating set
      of non-repeating braids. Then $\TheBraid$ can be put in either
      normal form in time
      \(
        \BigO{\TheHeight[][\Two]\TheRank\LogOf{\TheRank}}
        .
      \)\qed
    \end{lemma}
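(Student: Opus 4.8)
The plan is to reduce the statement to Thurston's greedy normal-form procedure and to account for its cost carefully. Since each non-repeating generator is determined by the permutation it induces on the $\TheRank$ strands, I would regard the input braid $\TheBraid$ as a sequence $\sigma_1\sigma_2\cdots\sigma_{\TheHeight}$ of $\TheHeight$ permutation braids on $\TheRank$ letters. The left-greedy (resp.\ right-greedy) normal form of $\TheBraid$ has the shape $\Delta^{k}\tau_1\cdots\tau_\ell$, where successive permutation braids satisfy a left-weightedness condition phrased in terms of their starting and finishing (descent) sets, and the task is to produce it within the claimed time bound.

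First I would isolate the \emph{local move}: given two consecutive permutation braids, rewrite their product into its two-syllable left-weighted form. Concretely, one computes the maximal permutation braid that left-divides the product --- a meet in the lattice of permutation braids --- and transfers the remaining crossings to the second syllable; whenever the first syllable becomes the full half-twist it is split off as a factor of $\Delta$, and whenever the second becomes trivial the word shortens. This requires only a bounded number of permutation compositions and comparisons of the finishing set of the left factor against the starting set of the right factor, each of which is linear in the $\BigO{\TheRank\LogOf{\TheRank}}$ bit-length of a permutation on $\TheRank$ letters (cf.\ the remark preceding the lemma). Hence one local move costs $\BigO{\TheRank\LogOf{\TheRank}}$.

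Next I would organize the global computation as a bubble sort on the syllable sequence: sweep the position index from left to right, applying the local move to each adjacent pair, and repeat the sweeps until an entire pass produces no change. Powers of $\Delta$ emitted along the way are pushed to the front using the generator-permuting automorphism induced by conjugation by $\Delta$, at cost $\BigO{\TheRank}$ per emission, which stays within the budget. A monotonicity argument --- each productive local move strictly enlarges the left-weighted prefix, and this prefix can grow only $\BigO{\TheHeight}$ times --- shows that $\BigO{\TheHeight}$ sweeps suffice, each consisting of $\BigO{\TheHeight}$ local moves. Thus the procedure performs $\BigO{\TheHeight[][\Two]}$ local moves, and multiplying by the per-move cost $\BigO{\TheRank\LogOf{\TheRank}}$ yields the total bound $\BigO{\TheHeight[][\Two]\TheRank\LogOf{\TheRank}}$.

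The main obstacle is the quantitative accounting rather than the underlying algebra. One must pin down a monotone measure on the syllable sequence --- for instance a lexicographic comparison of the successive descent sets, or the total ``leftward-pushable'' crossing content --- that strictly decreases under every productive sweep and is bounded by $\BigO{\TheHeight}$, so that the bubble sort is guaranteed to terminate after $\BigO{\TheHeight}$ passes. Establishing this termination bound, equivalently ruling out oscillation of the left-weightedness defect, is precisely what upgrades a mere correctness proof into the stated $\BigO{\TheHeight[][\Two]\TheRank\LogOf{\TheRank}}$ running-time guarantee.
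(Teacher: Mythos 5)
The paper offers no proof of this lemma at all --- it is imported verbatim from \cite[Corollary~9.5.3]{Epstein_et_al} and stamped with a \qed{} --- and your sketch is a faithful reconstruction of the argument in that source: local left-weighting of adjacent permutation braids at cost \(\BigO{\TheRank\LogOf{\TheRank}}\) per move, organized as a bubble sort performing \(\BigO{\TheHeight[][\Two]}\) moves, with the termination bound of \(\BigO{\TheHeight}\) sweeps being the one point that genuinely needs the monotonicity argument you flag. So the proposal is correct and takes essentially the same approach as the proof the paper relies on.
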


    For computations in $\BV$, we use the right-greedy
    normal form.
    \begin{Def}
      The \notion{normal form} of an element of $\BV$ is a
      triple
      \(
        \TupelOf{\TheTree[][\Top],\TheWord,\TheTree[][\Bot]}
        ,
      \)
      where $\TheWord$ is a word over the generating set of
      non-repeating braids in right-greedy normal form so that
      the diagram represented by the triple is reduced.
      (Of course, the way such a triple represents a diagram
      is by regarding the word as representing a braid.)
    \end{Def}
    \begin{prop}\label{prop:reduce}
      Any triple
      \(
        \TupelOf{\TheTree[][\Top],\TheWord,\TheTree[][\Bot]}
        ,
      \)
      where the trees have $\TheRank$ leaves and $\TheWord$ is
      of length $\TheHeight$, can be put into normal
      form in time
      \(
        \BigO{\TheHeight[][\Two]\TheRank[][\Two]\LogOf{\TheRank}}
        .
      \)
    \end{prop}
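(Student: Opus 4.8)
The plan is to run Algorithm~\ref{alg:reduce} to reduce the diagram and then, as a final step, to replace the braid word by its right-greedy normal form via a single application of the lemma above. I would therefore split the analysis into the reduction phase and the concluding normalization, and bound the running time of each separately.

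For the reduction phase I would first bound the number of elementary operations performed. Every successful eye-removal decreases the number of leaves of the trees by one, so at most $\TheRank-1$ of them can occur. By the remark following Algorithm~\ref{alg:reduce}, an eye-removal never creates a terminal caret to the left of the caret being removed, so the left-to-right sweep visits each terminal caret of the top tree only once. Counting the original terminal carets together with the ones created by removals, the number of parallelism tests carried out in step~2 is therefore $\BigO{\TheRank}$.

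Next I would estimate the cost of a single parallelism test. By the observation preceding Observation~\ref{obs:eye_removal}, deciding whether the two strands issuing from a terminal caret can be united amounts to checking the identity $\TheInjectionOf[\TheIndex]{\TheProjectionOf[\TheIndex]{\TheBraid}}=\TheBraid$. Forming $\TheProjectionOf[\TheIndex]{\TheBraid}$ and then doubling the relevant strand costs $\BigO{\TheRank\LogOf{\TheRank}}$ per letter, hence $\BigO{\TheHeight\TheRank\LogOf{\TheRank}}$ for the whole word; the remaining braid equality is an instance of the word problem, which I would settle by putting both words into normal form and comparing them. The crucial input here is Observation~\ref{obs:cap_word_length}: since neither deleting nor doubling a strand increases the word length, the braid stays of length at most $\TheHeight$ on at most $\TheRank$ strands throughout. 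By the lemma above, each such normal-form computation costs $\BigO{\TheHeight[][\Two]\TheRank\LogOf{\TheRank}}$, which dominates the $\BigO{\TheHeight\TheRank\LogOf{\TheRank}}$ spent on the strand operations.

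Combining the two bounds gives $\BigO{\TheRank}\cdot\BigO{\TheHeight[][\Two]\TheRank\LogOf{\TheRank}}=\BigO{\TheHeight[][\Two]\TheRank[][\Two]\LogOf{\TheRank}}$ for the reduction phase. The concluding normalization of the final braid is a single application of the lemma above and costs only $\BigO{\TheHeight[][\Two]\TheRank\LogOf{\TheRank}}$, so it is absorbed into the previous estimate, giving the asserted bound $\BigO{\TheHeight[][\Two]\TheRank[][\Two]\LogOf{\TheRank}}$. The step I expect to be the main obstacle is the bookkeeping behind the claim that only $\BigO{\TheRank}$ parallelism tests are needed: one must argue carefully that the repeat-loop in step~2, together with the left-to-right sweep, revisits no terminal caret and that every removal triggers only a bounded amount of extra work, so that the expensive braid-equality tests do not accumulate beyond linearly in $\TheRank$.
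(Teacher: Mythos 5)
Your proposal is correct and follows essentially the same route as the paper's proof: bound the number of parallelism checks by $\BigO{\TheRank}$ using the caret count and the left-to-right sweep, cost each check at $\BigO{\TheHeight[][\Two]\TheRank\LogOf{\TheRank}}$ via a braid normal-form comparison, invoke Observation~\ref{obs:cap_word_length} to keep the word length at most $\TheHeight$ throughout, and absorb the final right-greedy normalization. The bookkeeping you flag as a worry is handled in the paper exactly as you suggest, by noting that a tree with $\TheRank$ leaves has at most $\TheRank$ carets, giving at most $\TheRank$ unsuccessful and $\TheRank$ successful checks.
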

    \begin{proof}
      Since a tree with $\TheRank$ leaves has at most $\TheRank$
      carets, Algorithm~\ref{alg:reduce} requires at most $\TheRank$
      unsuccessful checks for eyes and at most $\TheRank$ successful
      checks. Each check can be carried out with complexity
      \(
        \BigO{\TheHeight[][\Two]\TheRank\LogOf{\TheRank}}
        .
      \)
      Removing an eye that has been found is done by computing
      $\TheProjectionOf[\TheIndex]{\TheWord}$ for the corresponding
      $\TheIndex$. This is done for each generator in the expression
      of $\TheWord$; and thus, it is linear in $\TheHeight$. Thus,
      we can eliminate a single eye in
      \(
        \BigO{\TheHeight\TheRank\LogOf{\TheRank}}
      \)
      time.

      Eliminating an eye decreases the number of strands of the braid and
      therefore has to be done at most $\TheRank$ times. Note that
      during this process, the word length of the braid part in
      the triple does not increase by Observation~\ref{obs:cap_word_length}.

      Once the diagram is reduced, the braid part is put into
      right-greedy normal form in time
      \(
        \BigO{\TheHeight[][\Two]\TheRank\LogOf{\TheRank}}
        .
      \)
    \end{proof}

    \begin{prop}
      Let
      \(
        \TupelOf{
          \TheTree[\One][\Top],
          \TheWord[\One],
          \TheTree[\One][\Bot]
        }
      \)
      and
      \(
        \TupelOf{
          \TheTree[\Two][\Top],
          \TheWord[\Two],
          \TheTree[\Two][\Bot]
        }
      \)
      be two triples in normal form representing the elements
      $\TheBvElement[\One]$ and $\TheBvElement[\Two]$, respectively.
      Let
      $\TheRank[\One]$ and $\TheRank[\Two]$ be their numbers
      of strands and let $\TheHeight[\One]$ and $\TheHeight[\Two]$
      be the word lengths of $\TheWord[\One]$ and
      $\TheWord[\Two]$, respectively.

      The normal form triple representing the product
      $\TheBvElement[\One]\TheBvElement[\Two]$ can be computed
      in time
      \(
        \BigO{
          \Parentheses[][\Two]{
            \TheHeight[\One]+\TheHeight[\Two]
          }
          \Parentheses[][\Two]{
            \TheRank[\One]+\TheRank[\Two]
          }
          \LogOf{\TheRank[\One]+\TheRank[\Two]}
        }
        .
      \)
    \end{prop}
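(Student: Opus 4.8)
The plan is to carry out Algorithm~\ref{alg:product} literally and to estimate the cost of its two phases -- the unreduction of the two factors to a common interface tree, and the subsequent normalization of their product -- separately, showing that the normalization phase dominates and already produces the asserted bound.

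First I would analyze the unreduction phase. Let $S$ be the least common refinement of the bottom tree $\TheTree[\One][\Bot]$ of the first factor and the top tree $\TheTree[\Two][\Top]$ of the second, and let $N$ be its number of leaves. Since a binary tree with $k$ leaves has $k-\One$ carets and the number of carets of $S$ is at most the total number of carets of the two refined trees, $S$ has at most $\TheRank[\One]+\TheRank[\Two]-\One$ leaves, so $N\le\TheRank[\One]+\TheRank[\Two]$. Unreducing the first factor amounts to doubling the $N-\TheRank[\One]$ strands needed to turn its bottom tree into $S$; each single doubling touches every one of the at most $\TheHeight[\One]$ generators of the braid and, for a non-repeating braid on at most $N$ strands, costs $\BigO{N\LogOf{N}}$ per generator, so the whole unreduction of the first factor runs in time $\BigO{(\TheRank[\One]+\TheRank[\Two])^2(\TheHeight[\One]+\TheHeight[\Two])\LogOf{\TheRank[\One]+\TheRank[\Two]}}$, and symmetrically for the second. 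Crucially, by Observation~\ref{obs:cap_word_length} doubling never increases word length, so the two unreduced braids lie in $\BraidGroup[N]$ and still have lengths at most $\TheHeight[\One]$ and $\TheHeight[\Two]$. Concatenating them according to Observation~\ref{obs:product} then yields a (semi-reduced) triple whose trees have at most $N\le\TheRank[\One]+\TheRank[\Two]$ leaves and whose braid has length at most $\TheHeight[\One]+\TheHeight[\Two]$.

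Second I would feed this triple into Proposition~\ref{prop:reduce}, which applies to an arbitrary triple and, for one with $N$ leaves and braid length at most $\TheHeight[\One]+\TheHeight[\Two]$, returns the normal form in time $\BigO{(\TheHeight[\One]+\TheHeight[\Two])^2(\TheRank[\One]+\TheRank[\Two])^2\LogOf{\TheRank[\One]+\TheRank[\Two]}}$. This is exactly the claimed estimate. Since the unreduction cost computed above carries only a single factor of $(\TheHeight[\One]+\TheHeight[\Two])$, it is dominated by the normalization cost, and adding the two phases gives the stated total; the tree manipulations are ignored throughout in accordance with the convention that braid operations dominate.

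The only genuinely delicate point -- and the step I expect to require the most care -- is the bookkeeping that keeps the braid length under control across the unreduction: everything hinges on Observation~\ref{obs:cap_word_length}, which guarantees that passing to the common refinement $S$ inflates the strand number but \emph{not} the lengths $\TheHeight[\One],\TheHeight[\Two]$, so that the braid handed to Proposition~\ref{prop:reduce} still has length at most $\TheHeight[\One]+\TheHeight[\Two]$. Besides that one must verify the leaf bound $N\le\TheRank[\One]+\TheRank[\Two]$ by the caret-counting argument above; with these two facts in hand the proposition is a routine composition of the per-phase bounds.
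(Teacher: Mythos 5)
Your proposal is correct and follows essentially the same route as the paper: unreduce both factors to the common refinement of the interface trees, bound the resulting strand number by $n_{1}+n_{2}$ via caret counting, use Observation~\ref{obs:cap_word_length} to keep the braid lengths at $h_{1}$ and $h_{2}$, and then let Proposition~\ref{prop:reduce} do the normalization, whose cost dominates. The per-doubling cost estimate and the final bookkeeping match the paper's proof line for line.
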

    \begin{proof}
      Using Algorithm~\ref{alg:product}, we have to control how the
      number of strands and the word length of the braid grow in
      the unreducing step. For either factor, the
      number of strands grows at most to
      $\TheRank[\One]+\TheRank[\Two]$ since $\TheTree[\Two][\Top]$
      has at most $\TheRank[\Two]$ carets that need to be cloned in
      $\TheTree[\One][\Bot]$ and $\TheTree[\One][\Bot]$ has at most
      $\TheRank[\One]$ carets that we might need to
      recreate in
      $\TheTree[\Two][\Top]$.
      Hence, we have to double at most
        $\TheRank[\Two]$ strands in $\TheWord[\One]$, which
        can be done in time
        \(
          \BigO{
            \TheRank[\Two]
            \TheHeight[\One]
            \Parentheses{
              \TheRank[\One]+\TheRank[\Two]
            }
            \LogOf{\TheRank[\One]+\TheRank[\Two]}
          }
          ;
        \)
        and we have do double at most
        $\TheRank[\One]$ strands in $\TheWord[\Two]$, which can be done
        in time
        \(
          \BigO{
            \TheRank[\One]
            \TheHeight[\Two]
            \Parentheses{
              \TheRank[\One]+\TheRank[\Two]
           }
            \LogOf{\TheRank[\One]+\TheRank[\Two]}
          }
          .
        \)
        The total time for unreducing the diagrams is therefore
        \(
          \BigO{
            \Parentheses{
              \TheHeight[\One]+\TheHeight[\Two]
            }
            \Parentheses[][\Two]{
              \TheRank[\One]+\TheRank[\Two]
            }
            \LogOf{\TheRank[\One]+\TheRank[\Two]}
          }
          .
        \)

      By Observation~\ref{obs:cap_word_length}, unreducing does not
      increase the word length of the braids. Thus, it follows
      from Proposition~\ref{prop:reduce} that we can reduce the
      triple that we obtain for the product
      $\TheBvElement[\One]\TheBvElement[\Two]$ to normal form
      in time
      \(
        \BigO{
            \Parentheses[][\Two]{
              \TheHeight[\One]+\TheHeight[\Two]
            }
            \Parentheses[][\Two]{
              \TheRank[\One]+\TheRank[\Two]
            }
            \LogOf{\TheRank[\One]+\TheRank[\Two]}
          },
      \)
      which dominates all other bounds.
    \end{proof}

    \begin{rem}
      On can save some computational effort by not putting all braids
      into normal form. Dropping the normalization steps from the
      algorithms above yields the following complexity bounds:
      \begin{enumerate}
        \item
          Any triple
          \(
            \TupelOf{
              \TheTree[][\Top],
              \TheWord,
              \TheTree[][\Bot]
            },
          \)
          where the trees have $\TheRank$ leaves and $\TheWord$ has
          length $\TheHeight$, can be reduced in time
          \(
            \BigO{
              \TheHeight\TheRank[][\Two]\LogOf{\TheRank}
            }
            .
          \)
        \item
          Let
          \(
            \TupelOf{
              \TheTree[\One][\Top],
              \TheWord[\One],
              \TheTree[\One][\Bot]
            }
          \)
          and
          \(
            \TupelOf{
              \TheTree[\Two][\Top],
              \TheWord[\Two],
              \TheTree[\Two][\Bot]
            }
          \)
          be two semi-reduced triples representing
          the elements $\TheBvElement[\One]$ and
          $\TheBvElement[\Two]$, respectively.
            For $\TheIndex\in\SetOf{\One,\Two}$, let
            $\TheRank[\TheIndex]$ be the number of strands
            in $\TheWord[\TheIndex]$ and let $\TheHeight[\TheIndex]$
            be the word length of $\TheWord[\TheIndex]$.
          A semi-reduced triple representing the product
          $\TheBvElement[\One]\TheBvElement[\Two]$ can be computed
          in time
          \(
            \BigO{
              \Parentheses{
                \TheHeight[\One]+\TheHeight[\Two]
              }
              \Parentheses{
                \TheRank[\One]+\TheRank[\Two]
              }
              \LogOf{
                \TheRank[\One]+\TheRank[\Two]
              }
            }
            .
          \)
          This triple has trees with at most
          $\TheRank[\One]+\TheRank[\Two]$
          leaves and a braid that is represented as a word of
          length $\TheHeight[\One]+\TheHeight[\Two]$.
      \end{enumerate}
      From a practical point of view, it therefore pays off to put
      elements into normal form only when one needs to
      test for equality.
    \end{rem}
    \begin{rem}
      We note that the bit-length needed to encode a triple with
      $\TheRank$ strands and the braid given as a word of length
      $\TheHeight$ is about $\TheHeight\TheRank\LogOf{\TheRank}$.
      Thus, multiplication of elements in $\BV$ is actually quadratic
      in terms of total length of inputs, i.e., multiplication in
      $\BV$ is about as efficient as the elementary school algorithm
      for multiplying multi-digit integers.
    \end{rem}

  \section{Linear Representations}\label{sec:representations}

    \begin{lemma}[{\cite[Corollary~4.14]{Brin:2007}}]
      The group $\BV$ is generated by three families of
      generators
      $\EffGen[\GenIndex]$,
      $\BrdGen[\GenIndex]$,
      and
      $\AltGen[\GenIndex]$
      (where $\GenIndex\geq\Zero$)
      subject to the following relations:
      \[
        \begin{array}{r@{\,=\,}l@{\kern1cm}l}
          \EffGen[\LowIndex]\EffGen[\HighIndex]
          &
          \EffGen[\HighIndex]\EffGen[\LowIndex+\One]
          &
          \HighIndex<\LowIndex
          \\
          \AltGen[\HighIndex][\Eps]
          \EffGen[\HighIndex]
          &
          \EffGen[\HighIndex+\One]
          \AltGen[\HighIndex][\Eps]
          \AltGen[\HighIndex+\One][\Eps]
          &
          \HighIndex\geq\Zero,
          \Eps = \pm\One
          \\
          \AltGen[\LowIndex]\EffGen[\HighIndex]
          &
          \EffGen[\HighIndex]\AltGen[\LowIndex]
          &
          \HighIndex>\LowIndex+\One
          \\
          \BrdGen[\LowIndex]\EffGen[\HighIndex]
          &
          \EffGen[\HighIndex]\BrdGen[\LowIndex+\One]
          &
          \HighIndex<\LowIndex
          \\
          \AltGen[\HighIndex]
          &
          \BrdGen[\HighIndex+\One][-\One]
          \EffGen[\HighIndex][-\One]
          \BrdGen[\HighIndex]
          &
          \HighIndex\geq\Zero
          \\
          \AltGen[\LowIndex]\AltGen[\HighIndex]
          &
          \AltGen[\HighIndex]\AltGen[\LowIndex]
          &
          \AbsValue{\HighIndex-\LowIndex}\geq\Two
          \\
          \AltGen[\HighIndex]
          \AltGen[\HighIndex+\One]
          \AltGen[\HighIndex]
          &
          \AltGen[\HighIndex+\One]
          \AltGen[\HighIndex]
          \AltGen[\HighIndex+\One]
          &
          \HighIndex\geq\Zero
          \\
          \BrdGen[\LowIndex]
          \AltGen[\HighIndex]
          &
          \AltGen[\HighIndex]
          \BrdGen[\LowIndex]
          &
          \LowIndex\geq\HighIndex+\Two
          \\
          \AltGen[\HighIndex]
          \BrdGen[\HighIndex+\One]
          \AltGen[\HighIndex]
          &
          \BrdGen[\HighIndex+\One]
          \AltGen[\HighIndex]
          \BrdGen[\HighIndex+\One]
          &
          \HighIndex\geq\Zero
          \\
          \AltGen[\GenIndex]
          &
          \BrdGen[\GenIndex]
          \EffGen[\GenIndex]
          \BrdGen[\GenIndex+\One][-\One]
          &
          \GenIndex \geq \Zero
        \end{array}
      \]
      Moreover,
      \begin{enumerate}
        \item
          The family
          $\SetOf[{\EffGen[\GenIndex]}]{\GenIndex\geq\Zero}$
          generates a copy of $\ThEff$ inside $\BV$.
        \item
          Imposing the additional relations
          \[
            \BrdGen[\GenIndex][\Two]
            =
            \AltGen[\GenIndex][\Two]
            =
            \TheTrivialElement,
            \qquad
            \GenIndex\geq\Zero
          \]
          turns the above into a presentation for $\ThVee$.
      \end{enumerate}
    \end{lemma}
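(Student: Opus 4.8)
This statement is quoted from \cite{Brin:2007} and so requires no fresh proof; nonetheless, the diagram calculus and normal form of Section~\ref{sec:complexity} give a self-contained route, which I sketch here. Write $\Gamma$ for the group presented by the displayed generators and relations, and let $\Phi\colon\Gamma\to\BV$ be the homomorphism sending $\EffGen[\GenIndex]$, $\BrdGen[\GenIndex]$, $\AltGen[\GenIndex]$ to the corresponding braided band diagrams (those for $\EffGen[\Zero],\EffGen[\One],\BrdGen[\Zero],\BrdGen[\One]$ are displayed in Section~\ref{sec:diagrams}, and the higher-indexed generators are the evident shifts). First I would verify that $\Phi$ is well defined by checking each of the ten relation families diagrammatically: draw the band diagrams on both sides and confirm they coincide after eye/joint removal and ambient homotopy. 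Surjectivity is then immediate, because relations~1 and~4, $\EffGen[\LowIndex]\EffGen[\HighIndex]=\EffGen[\HighIndex]\EffGen[\LowIndex+\One]$ and $\BrdGen[\LowIndex]\EffGen[\HighIndex]=\EffGen[\HighIndex]\BrdGen[\LowIndex+\One]$, let one rewrite $\EffGen[\GenIndex]$ and $\BrdGen[\GenIndex]$ for $\GenIndex\geq\Two$ in terms of $\EffGen[\Zero],\EffGen[\One],\BrdGen[\Zero],\BrdGen[\One]$, while relation~10, $\AltGen[\GenIndex]=\BrdGen[\GenIndex]\EffGen[\GenIndex]\BrdGen[\GenIndex+\One][-\One]$, eliminates the $\AltGen[\GenIndex]$; thus the image of $\Phi$ is generated by the four elements that generate $\BV$ by the generation result quoted earlier.

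The substance is injectivity, and the shape of the relations dictates a normal form mirroring the triples of Observation~\ref{obs:eye_removal}. The idea is that the three generator families play three distinct roles: the $\EffGen[\GenIndex]$ obey exactly the infinite Thompson relation~1 and build the two trees; the $\AltGen[\GenIndex]$ obey the Artin braid relations (relation~6, $\AltGen[\LowIndex]\AltGen[\HighIndex]=\AltGen[\HighIndex]\AltGen[\LowIndex]$ for $\AbsValue{\HighIndex-\LowIndex}\geq\Two$, and relation~7, $\AltGen[\HighIndex]\AltGen[\HighIndex+\One]\AltGen[\HighIndex]=\AltGen[\HighIndex+\One]\AltGen[\HighIndex]\AltGen[\HighIndex+\One]$) and supply the middle braid; and the mixed relations govern how a crossing migrates across a caret. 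Relation~2, $\AltGen[\HighIndex][\Eps]\EffGen[\HighIndex]=\EffGen[\HighIndex+\One]\AltGen[\HighIndex][\Eps]\AltGen[\HighIndex+\One][\Eps]$, is precisely the algebraic form of doubling a strand (the map $\TheInjection[\HighIndex]$) as a caret splits, and relation~3, $\AltGen[\LowIndex]\EffGen[\HighIndex]=\EffGen[\HighIndex]\AltGen[\LowIndex]$ for $\HighIndex>\LowIndex+\One$, records that a crossing passes unchanged across a caret lying to its right. Using these together with relation~1 to sort the positive tree part, I would rewrite an arbitrary word into the canonical shape: a positive word in the $\EffGen[\GenIndex]$ (the top tree), followed by a word in the $\AltGen[\GenIndex]$ (the braid), followed by a negative word in the $\EffGen[\GenIndex]$ (the bottom tree), i.e.\ a word describing a semi-reduced triple $\TupelOf{\TheTree[][\Top],\TheBraid,\TheTree[][\Bot]}$.

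Injectivity then follows by comparison of normal forms. Distinct reduced triples represent distinct elements of $\BV$ by uniqueness of reduced diagrams (the first Proposition of Section~\ref{sec:diagrams}) together with Algorithm~\ref{alg:test_equal}; hence if two words of $\Gamma$ have equal image under $\Phi$, they reduce to the same canonical triple and so are already equal in $\Gamma$. I expect the main obstacle to be completeness of the relations, that is, proving that the rewriting to canonical form always succeeds and terminates using only relations~1--10. Concretely, one must show that every wrongly ordered adjacency --- a braid generator on the wrong side of a caret, or two carets out of order --- can be resolved by some relation at the cost of a bounded reindexing, and that a suitable complexity measure strictly decreases at each step so that the procedure halts. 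This confluence-and-termination analysis is the genuinely technical part, and it is where Brin's argument concentrates its effort.

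It remains to address the two additional clauses. For the first, let $H$ be the subgroup generated by the $\EffGen[\GenIndex]$. It is a quotient of $\ThEff$, since those generators satisfy the standard $\ThEff$-relations; but $\Phi$ maps $H$ onto the non-braiding reduced diagrams, which form a copy of $\ThEff$ by the Remark in Section~\ref{sec:diagrams}, and the composite $\ThEff\twoheadrightarrow H\to\ThEff$ is the identity on generators, so $H\cong\ThEff$. For the second, imposing $\BrdGen[\GenIndex][\Two]=\AltGen[\GenIndex][\Two]=\TheTrivialElement$ converts the Artin relations~6 and~7 into the Coxeter relations of the symmetric group, so the middle braid collapses to a permutation and the canonical triples become $\TupelOf{\TheTree[][\Top],\sigma,\TheTree[][\Bot]}$ with $\sigma$ a permutation, which are exactly the tree-permutation-tree representatives of $\ThVee$. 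Re-running the normal-form argument with braids replaced by permutations identifies the quotient with $\ThVee$; the only extra point to check is that no further collapse occurs, i.e.\ that the augmented relations present $\ThVee$ itself rather than a proper quotient.
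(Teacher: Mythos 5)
The paper gives no proof of this lemma at all: it is stated purely as a citation of Brin's Corollary~4.14 in \cite{Brin:2007}, which is exactly how your opening sentence treats it, so in the essential respect you match the paper's approach. Your supplementary sketch is a sensible roadmap (and roughly mirrors Brin's own strategy of a tree--braid--tree normal form with the $\EffGen[\GenIndex]$ building trees, the $\AltGen[\GenIndex]$ supplying the braid, and the mixed relations governing strand doubling), but since its decisive step --- the completeness and termination of the rewriting, which you yourself flag as the technical core, along with the non-collapse check for the $\ThVee$ quotient --- is deferred rather than carried out, it should be read as citation-plus-outline rather than as an independent proof, just as the paper's treatment is a bare citation.
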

    In particular, $\ThVee$ is a quotient of $\BV$. Thus, $\BV$ is
    not simple. We shall show, however, that it is not too far from
    being simple: the normal closure of $[\ThEff,\ThEff]$ (regarded
    as a subgroup of $\BV$) is all of $\BV$:

    \begin{lemma}\label{lemma:commutators}
      Consider $\ThEff$ as a subgroup of $\BV$, generated by
      $\SetOf[{\EffGen[\GenIndex]}]{ \GenIndex\geq\Zero}$.
      Then, $\BV$ does not have a proper normal subgroup
      containing
      $[\ThEff,\ThEff]$.
    \end{lemma}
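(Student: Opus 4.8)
The plan is to fix a normal subgroup $\TheNormalClosure$ of $\BV$ with $[\ThEff,\ThEff]\subseteq\TheNormalClosure$ and to prove that the quotient $Q=\BV/\TheNormalClosure$ is trivial; throughout I work inside $Q$, writing the images of the generators $\AltGen[\GenIndex]$ and $\BrdGen[\GenIndex]$ by the same symbols. Since $[\ThEff,\ThEff]\subseteq\TheNormalClosure$, the image of $\ThEff=\langle\EffGen[\GenIndex]\rangle$ in $Q$ is abelian, so the relation $\EffGen[\LowIndex]\EffGen[\HighIndex]=\EffGen[\HighIndex]\EffGen[\LowIndex+\One]$ (valid for $\HighIndex<\LowIndex$) reduces, after cancelling $\EffGen[\HighIndex]$, to $\EffGen[\LowIndex]=\EffGen[\LowIndex+\One]$ for every $\LowIndex\geq\One$. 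Hence all the $\EffGen[\GenIndex]$ with $\GenIndex\geq\One$ collapse to a single element $a$, while $\EffGen[\Zero]$ maps to some element $b$ commuting with $a$. It then remains to show that $a=b=\TheTrivialElement$ and that every braid generator dies as well.

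The crux — and the step I expect to be the main obstacle — is collapsing the braid generators, since the braid relations by themselves never force commutativity; the leverage must come from the relations coupling $\AltGen[\GenIndex]$ and $\BrdGen[\GenIndex]$ to $\ThEff$. First I would read $\AltGen[\LowIndex]\EffGen[\HighIndex]=\EffGen[\HighIndex]\AltGen[\LowIndex]$ (valid for $\HighIndex>\LowIndex+\One$) at $\HighIndex=\LowIndex+\Two$: its right-hand index image is $a$, so every $\AltGen[\LowIndex]$ commutes with $a$ in $Q$. Next, the relation $\AltGen[\HighIndex]\EffGen[\HighIndex]=\EffGen[\HighIndex+\One]\AltGen[\HighIndex]\AltGen[\HighIndex+\One]$ rearranges to $\AltGen[\HighIndex+\One]=\AltGen[\HighIndex][-\One]\EffGen[\HighIndex+\One][-\One]\AltGen[\HighIndex]\EffGen[\HighIndex]$, which for $\HighIndex\geq\One$ becomes the commutator $\AltGen[\HighIndex+\One]=\AltGen[\HighIndex][-\One]a^{-1}\AltGen[\HighIndex]a=\TheTrivialElement$ by the commutation just established. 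Thus $\AltGen[\GenIndex]=\TheTrivialElement$ in $Q$ for all $\GenIndex\geq\Two$.

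With the high-index $\AltGen[\GenIndex]$ killed, a short cascade finishes the proof. Comparing $\AltGen[\GenIndex]=\BrdGen[\GenIndex]\EffGen[\GenIndex]\BrdGen[\GenIndex+\One][-\One]$ (at $\GenIndex\geq\Two$, where the left side is trivial) with $\BrdGen[\LowIndex]\EffGen[\HighIndex]=\EffGen[\HighIndex]\BrdGen[\LowIndex+\One]$ (at $\HighIndex=\One$) forces $a=\TheTrivialElement$; this collapses the $\BrdGen[\GenIndex]$ with $\GenIndex\geq\One$ to a single element $r$ that centralizes $b$, and then $\AltGen[\One]=\BrdGen[\One]\EffGen[\One]\BrdGen[\Two][-\One]=\TheTrivialElement$, which together with $\AltGen[\Zero]\EffGen[\Zero]=\EffGen[\One]\AltGen[\Zero]\AltGen[\One]$ yields $b=\TheTrivialElement$. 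At index $0$, now that $\EffGen[\Zero]=\TheTrivialElement$, the two presentations of $\AltGen[\Zero]$ simplify to $\AltGen[\Zero]=\BrdGen[\Zero]\BrdGen[\One][-\One]=\BrdGen[\One][-\One]\BrdGen[\Zero]$; combined with the braid relation $\AltGen[\Zero]\BrdGen[\One]\AltGen[\Zero]=\BrdGen[\One]\AltGen[\Zero]\BrdGen[\One]$ these give $\BrdGen[\Zero]=r^{\Two}$ and $\AltGen[\Zero]=r$, whereupon $\AltGen[\Zero]\AltGen[\One]\AltGen[\Zero]=\AltGen[\One]\AltGen[\Zero]\AltGen[\One]$ with $\AltGen[\One]=\TheTrivialElement$ forces $\AltGen[\Zero]=\TheTrivialElement$. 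Hence $r=\TheTrivialElement$ and every generator is trivial in $Q$, so $Q=\TheTrivialElement$ and $\TheNormalClosure=\BV$, which is the assertion.
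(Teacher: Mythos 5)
Your proof is correct, and although it opens exactly as the paper's does, it finishes along a genuinely different line. Both arguments begin by noting that in $Q=\BV\rmod\TheNormalClosure$ the image of $\ThEff$ is abelian, so the relations $\EffGen[\LowIndex]\EffGen[\HighIndex]=\EffGen[\HighIndex]\EffGen[\LowIndex+\One]$ identify all $\EffGen[\GenIndex]$ with $\GenIndex\geq\One$; and both then combine $\AltGen[\HighIndex]\EffGen[\HighIndex]=\EffGen[\HighIndex+\One]\AltGen[\HighIndex]\AltGen[\HighIndex+\One]$ with the commutation of $\AltGen[\HighIndex]$ and $\EffGen[\HighIndex+\Two]$ to kill $\AltGen[\GenIndex]$ for $\GenIndex\geq\Two$. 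From there the paper takes a shortcut you could also have used: once $\AltGen[\GenIndex+\One]=\TheTrivialElement$, the braid relation $\AltGen[\GenIndex]\AltGen[\GenIndex+\One]\AltGen[\GenIndex]=\AltGen[\GenIndex+\One]\AltGen[\GenIndex]\AltGen[\GenIndex+\One]$ collapses to $\AltGen[\GenIndex][\Two]=\AltGen[\GenIndex]$, so $\AltGen[\One]$ and then $\AltGen[\Zero]$ die in one line; the mixed braid relations then kill $\BrdGen[\GenIndex]$ for $\GenIndex\geq\One$, and the paper concludes by checking that all squares of the $\AltGen[\GenIndex]$ and $\BrdGen[\GenIndex]$ vanish, so that $Q$ is a \emph{proper} quotient of $\ThVee$ and hence trivial by the simplicity of $\ThVee$. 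You never invoke that simplicity: your extraction of $a=\TheTrivialElement$ by playing the relation $\AltGen[\GenIndex]=\BrdGen[\GenIndex]\EffGen[\GenIndex]\BrdGen[\GenIndex+\One][-\One]$ against $\BrdGen[\LowIndex]\EffGen[\HighIndex]=\EffGen[\HighIndex]\BrdGen[\LowIndex+\One]$, followed by the explicit cascade killing $b$, $\AltGen[\Zero]$, $\AltGen[\One]$, $r$, and $\BrdGen[\Zero]$, makes the argument entirely self-contained at the level of Brin's presentation. That is a real gain (it reproves, rather than cites, the triviality of this particular proper quotient of $\ThVee$); the price is a longer endgame, and in particular your route to $\AltGen[\One]=\TheTrivialElement$ via $\BrdGen[\One]=\BrdGen[\Two]$ is more roundabout than the braid-relation trick above. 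I checked each of your deductions against the listed relations and they all hold.
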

    \begin{proof}
      We first note that for $\TheIndex\geq\One$,
      \[
        \EffGen[\TheIndex]\EffGen[\TheIndex+\One][-\One]
        =
        \EffGen[\Zero]\EffGen[\TheIndex+\One]\EffGen[\Zero][-\One]
        \EffGen[\TheIndex+\One][-\One]
        =
        [\EffGen[\Zero],\EffGen[\TheIndex+\One]]
      \]
      and
      \[
        \EffGen[\TheIndex+\One]\EffGen[\TheIndex][-\One]
        =
        \EffGen[\TheIndex+\One]
        \EffGen[\Zero]\EffGen[\TheIndex+\One][-\One]\EffGen[\Zero][-\One]
        =
        [\EffGen[\TheIndex+\One],\EffGen[\Zero]]
      \]
      are commutators. Telescoping products of such commutators shows that
      \(
        \EffGen[\TheIndex]\EffGen[\AltIndex][-\One]\in
        [\ThEff,\ThEff]
      \)
      for $\TheIndex,\AltIndex\geq\One$.

      Let $\TheNormalClosure$ be the normal closure of
      $[\ThEff,\ThEff]$ in $\BV$. For all $\TheIndex\geq\One$,

        \[
          \TheNormalClosure\ni
          \AltGen[\TheIndex]\EffGen[\TheIndex]\EffGen[\TheIndex+\Two][-\One]
          \AltGen[\TheIndex][-\One]
          =
          \AltGen[\TheIndex]\EffGen[\TheIndex]\AltGen[\TheIndex][-\One]
          \EffGen[\TheIndex+\Two][-\One]
          =
          \EffGen[\TheIndex+\One]\AltGen[\TheIndex]\AltGen[\TheIndex+\One]
          \AltGen[\TheIndex][-\One]
          \EffGen[\TheIndex+\Two][-\One]
          .
        \]

      Hence,
      \(
        \AltGen[\TheIndex]\AltGen[\TheIndex+\One]\AltGen[\TheIndex][-\One]
        \EffGen[\TheIndex+\Two][-\One]\EffGen[\TheIndex+\One]
        \in
        \TheNormalClosure
        ,
      \)
      and therefore
      \(
        \AltGen[\TheIndex]\AltGen[\TheIndex+\One]\AltGen[\TheIndex][-\One]
        \in
        \TheNormalClosure.
      \)
      Thus, $\AltGen[\TheIndex+\One]\in\TheNormalClosure$ for each
      $\TheIndex\geq\One$.

      Now, we show that all generators of $\BV$ die in the quotient
      $\BV\rmod\TheNormalClosure$. We already know this
      for $\AltGen[\TheIndex]$ with $\TheIndex\geq\Two$.
      Using the braid relations between $\AltGen[\One]$ and
      $\AltGen[\Two]$, we find that $\AltGen[\One]$ dies as well,
      and then, in view of the braid relation between
      $\AltGen[\Zero]$ and $\AltGen[\One]$, we find that
      $\AltGen[\Zero]$ dies as well.

      The family of mixed braid relations (between
      $\AltGen[\TheIndex]$ and $\BrdGen[\TheIndex+\One]$) now
      implies that $\BrdGen[\TheIndex]=\TheTrivialElement$
      in $\BV\rmod\TheNormalClosure$ for $\TheIndex\geq\One$.
      Now the relations
      \(
        \AltGen[\Zero]=
        \BrdGen[\Zero]\EffGen[\Zero]\BrdGen[\One][-\One]
      \)
      and
      \(
        \AltGen[\Zero]=
        \BrdGen[\One][-\One]\EffGen[\Zero][-\One]\BrdGen[\Zero]
      \)
      imply
      \(
        \AltGen[\Zero][\Two]=\TheTrivialElement
      \)
      in $\BV\rmod\TheNormalClosure$.

      Thus, the squares of all $\AltGen[\TheIndex]$
      and all $\BrdGen[\TheIndex]$ die in
      $\BV\rmod\TheNormalClosure$, whence $\BV\rmod\TheNormalClosure$
      is a quotient of $\ThVee$. However, already too many generators
      are gone. So $\BV\rmod\TheNormalClosure$ is a proper quotient
      of $\ThVee$, and therefore trivial.
    \end{proof}

    \begin{observation}
      Any linear representation of a simple group is either faithful
      or trivial.\qed
    \end{observation}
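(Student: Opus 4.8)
The plan is to exploit the elementary correspondence between kernels of homomorphisms and normal subgroups. A linear representation of a group $G$ is, by definition, a group homomorphism $\rho \colon G \to \mathrm{GL}(V)$ into the general linear group of some vector space $V$ over a field. The first step is to recall the standard fact that the kernel $\ker\rho = \{\, g \in G : \rho(g) = \mathrm{id}_V \,\}$ is a normal subgroup of $G$, being the preimage of the trivial subgroup $\{\mathrm{id}_V\} \trianglelefteq \mathrm{GL}(V)$.

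The key step is then to invoke simplicity. By definition, a simple group has exactly two normal subgroups, namely $\{1\}$ and $G$ itself. Applying this dichotomy to $\ker\rho$, which the previous step identified as normal, leaves precisely two mutually exclusive possibilities. In the first case $\ker\rho = \{1\}$, which says exactly that $\rho$ is injective, i.e.\ faithful. In the second case $\ker\rho = G$, so that $\rho(g) = \mathrm{id}_V$ for every $g \in G$; this is precisely the trivial representation. Since the two cases exhaust all options, the asserted dichotomy follows immediately.

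I do not expect any genuine obstacle here: the entire content is the first-isomorphism-theorem packaging specialized to the normality of kernels, and the proof is essentially a single line. The only point deserving attention is a purely terminological one, namely confirming that the definition of \emph{faithful} in force coincides with injectivity of $\rho$ and that \emph{trivial} means the constant homomorphism sending every element to $\mathrm{id}_V$; once these definitional matches are made explicit, nothing further is required.
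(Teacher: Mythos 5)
Your proof is correct and is exactly the argument the paper has in mind: the observation is stated with an immediate \qed precisely because the kernel-is-normal-plus-simplicity dichotomy is the whole content. Nothing to add.
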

    \begin{cor}
      Neither the commutator subgroup $[\ThEff,\ThEff]$ in
      Thompson's group $\ThEff$ nor Thompson's group $\ThVee$ do
      admit a non-trivial linear representation (in any characteristic).
    \end{cor}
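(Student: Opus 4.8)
The plan is to reduce each statement to the non-existence of a \emph{faithful} representation, and then to obstruct faithfulness by residual finiteness. Both $[\ThEff,\ThEff]$ and $\ThVee$ are infinite simple groups (a classical fact for Thompson's groups), so by the Observation every linear representation of either group, in any characteristic, is faithful or trivial. It therefore suffices to prove that neither group is linear, i.e.\ that neither embeds into $\operatorname{GL}_{n}(k)$ for any $n$ and any field $k$.

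The engine for non-linearity is Malcev's theorem: a finitely generated linear group over an arbitrary field is residually finite. Since a finitely generated \emph{infinite simple} group has only the trivial finite quotient, it is never residually finite. For $\ThVee$ this finishes the argument at once: $\ThVee$ is finitely generated and infinite simple, hence not residually finite, hence not linear; combined with the Observation, $\ThVee$ admits no non-trivial linear representation in any characteristic.

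For $[\ThEff,\ThEff]$ one cannot apply Malcev to the group itself, since $[\ThEff,\ThEff]$ is \emph{not} finitely generated: realizing $\ThEff$ as homeomorphisms of $[0,1]$, the commutator subgroup is the set of elements that are the identity near both endpoints, and this is a strictly increasing union of copies of $\ThEff$ supported on subintervals $[a,b]\subset(0,1)$. I would instead pass to one such copy: this simultaneously exhibits an embedding $\ThEff\hookrightarrow[\ThEff,\ThEff]$ and accounts for non-finite-generation. Now $\ThEff$ is finitely generated but not residually finite, since every finite quotient of $\ThEff$ kills the commutator subgroup and hence factors through the abelianization $\ThEff\rmod[\ThEff,\ThEff]$; thus the intersection of all finite-index subgroups contains $[\ThEff,\ThEff]\neq\TheTrivialElement$. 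Were $[\ThEff,\ThEff]$ linear, its finitely generated subgroup $\ThEff$ would be a finitely generated linear group, hence residually finite by Malcev --- a contradiction. Therefore $[\ThEff,\ThEff]$ is not linear, and by the Observation it has no non-trivial linear representation in any characteristic.

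The main obstacle is precisely the $[\ThEff,\ThEff]$ case: because this group is not finitely generated, Malcev gives no direct information, and the crux is to locate inside it a finitely generated subgroup that fails to be residually finite. The self-similar embedding $\ThEff\hookrightarrow[\ThEff,\ThEff]$ supplies exactly such a subgroup, so the technical heart of the proof is verifying this embedding together with the (standard) failure of residual finiteness for $\ThEff$.
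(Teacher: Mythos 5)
Your proof is correct, and it reaches the conclusion by a genuinely different route from the paper's. Both arguments share the same skeleton: $[\ThEff,\ThEff]$ and $\ThVee$ are infinite simple, so by the Observation it suffices to show neither is linear, and for $[\ThEff,\ThEff]$ (and, in the paper, also for $\ThVee$) this reduces to the non-linearity of a copy of $\ThEff$ sitting inside. The difference is the engine for non-linearity of $\ThEff$. The paper uses the Tits alternative: a finitely generated linear group over any field is virtually solvable or contains a non-abelian free subgroup, and $\ThEff$ is neither virtually solvable nor (by the Brin--Squier theorem) does it contain a free subgroup of rank two. You use Malcev's theorem instead: finitely generated linear groups are residually finite, while $\ThEff$ is not, because every finite quotient of $\ThEff$ factors through its abelianization, so the finite residual contains $[\ThEff,\ThEff]\neq\TheTrivialElement$. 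Your route has two advantages: it avoids the Brin--Squier theorem, a nontrivial dynamical fact specific to $\ThEff$, and instead reuses exactly the normal-subgroup structure (every proper quotient of $\ThEff$ is abelian) that is already implicit in the simplicity claims; and for $\ThVee$ it is entirely self-contained, since a finitely generated infinite simple group has no proper finite-index subgroups at all and hence cannot be residually finite --- no embedding of $\ThEff$ is needed there. The paper's route, conversely, needs only one non-linearity argument and propagates it by passing to subgroups. Both work uniformly in all characteristics, since Malcev's theorem and the finitely generated Tits alternative hold over arbitrary fields. Your identification of the crux --- that $[\ThEff,\ThEff]$ is not finitely generated, so Malcev must be applied to an embedded copy of $\ThEff$ rather than to $[\ThEff,\ThEff]$ itself --- is exactly right, and that embedding (elements supported on a dyadic subinterval of $(0,1)$ have slope one, hence are trivial, near both endpoints) is the same one the paper invokes without proof.
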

    \begin{proof}
      First note that $\ThEff$ is not linear in any characteristic:
      it is finitely generated and not solvable. It it
      was linear, it would contain a non-abelian free subgroup by the
      Tits Alternative. But $\ThEff$ does not contain non-abelian free
      subgroups.

      The commutator subgroup $[\ThEff,\ThEff]$ is also not linear
      in any characteristic since it contains a copy of $\ThEff$ as
      a subgroup. The claim for $[\ThEff,\ThEff]$ nor follows since
      $[\ThEff,\ThEff]$ is simple.

      The same argument applies to Thompson's group $\ThVee$, which
      is simple and also contains a copy of $\ThEff$.
    \end{proof}

    The main theorem now follows immediately:
    \begin{theorem}\label{thm:no_representations}
      The group $\BV$ does not admit non-trivial linear representations
      in any characteristic.
    \end{theorem}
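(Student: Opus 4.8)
The plan is to combine the preceding results into a short structural argument. The key ingredients are already assembled in the excerpt: Lemma~\ref{lemma:commutators} tells us that the normal closure $\TheNormalClosure$ of $[\ThEff,\ThEff]$ in $\BV$ equals all of $\BV$, and the corollary establishes that $[\ThEff,\ThEff]$ admits no non-trivial linear representation in any characteristic. My strategy is to argue by contradiction: suppose $\rho\mapcolon\BV\longrightarrow\mathrm{GL}_d(k)$ is a non-trivial linear representation over some field $k$. I want to derive a contradiction by showing that $\rho$ must kill the generating subgroup $[\ThEff,\ThEff]$, and then leverage normality to conclude that $\rho$ is trivial on all of $\BV$.

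First I would restrict $\rho$ to the subgroup $[\ThEff,\ThEff]\subseteq\BV$. This restriction is itself a linear representation of $[\ThEff,\ThEff]$ of the same dimension $d$ and in the same characteristic. By the corollary, any linear representation of $[\ThEff,\ThEff]$ is trivial, so $\rho$ must send every element of $[\ThEff,\ThEff]$ to the identity matrix; that is, $[\ThEff,\ThEff]\subseteq\ker\rho$. Next I would invoke the fact that $\ker\rho$ is a normal subgroup of $\BV$. Since $\ker\rho$ is normal and contains $[\ThEff,\ThEff]$, it must contain the normal closure $\TheNormalClosure$ of $[\ThEff,\ThEff]$ in $\BV$. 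But Lemma~\ref{lemma:commutators} identifies this normal closure as the whole group $\BV$, so $\ker\rho = \BV$, forcing $\rho$ to be the trivial representation --- contradicting our assumption.

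The argument is essentially a two-line deduction once the heavy lifting in Lemma~\ref{lemma:commutators} and the corollary is in hand, so there is no serious obstacle at this stage; the only point requiring a moment's care is making sure the restriction of a representation to a subgroup is handled correctly (a representation of $G$ restricts to a representation of any subgroup, and triviality is inherited in the obvious way). The genuine difficulty of the paper is concentrated in the normal-closure computation, which has already been carried out. Thus the final theorem follows immediately, and I would present it in just a few sentences, emphasizing the logical chain $[\ThEff,\ThEff]\subseteq\ker\rho \implies \TheNormalClosure\subseteq\ker\rho \implies \BV=\ker\rho$.
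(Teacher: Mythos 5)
Your proposal is correct and follows essentially the same route as the paper: the published proof likewise observes that $[\ThEff,\ThEff]$ lies in the kernel of any linear representation of $\BV$ (via the corollary on non-linearity of $[\ThEff,\ThEff]$) and then invokes Lemma~\ref{lemma:commutators} together with normality of the kernel to conclude the kernel is all of $\BV$. The only cosmetic difference is that you phrase it as a contradiction while the paper argues directly.
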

    \begin{proof}
      The subgroup
      $[\ThEff,\ThEff]$ lies within the kernel of any linear representation
      of $\BV$. However, such a kernel is a normal subgroup and
      therefore exhausts $\BV$ by Lemma~\ref{lemma:commutators}.

    \end{proof}

  \section{On the Cryptographic Use
  of \boldmath$\BV$}\label{sec:cryptography}

    After the paper by Anshel, Anshel, and Goldfeld
    \cite{Anshel.Anshel.Goldfeld:1999}, group based
    cryptography got a huge boost and is rapidly developing
    since. An idea behind using groups in cryptography is
    that finding solutions of certain equations or systems
    of equations over a given group is computationally infeasible
    while generating equations with known or given solutions might
    be efficient since it only involves
    multiplication and computing normal forms.

    We recall the key-exchange protocol proposed by
    Anshel, Anshel, and Goldfeld. Below,
    \(
      \AliceSize, \AliceLength, \BobSize,
    \)
    and
    \(
      \BobLength
    \)
    are integer parameters and $\TheGroup$ is a group,
    called the \notion{platform group} of the protocol.
    A key-exchange has the goal that Alice and Bob collaboratively
    create a secret that is shared between them. In this particular
    protocol, the shared secret will be an element of
    $\TheGroup$. It is selected as follows:
    \begin{enumerate}
      \item
        Alice chooses randomly a public set
        \(
          \SetOf{
            \AliceElement[\One],\ldots,\AliceElement[\AliceSize]
          }
          \subset\TheGroup
        \)
        and a private key
        \(
          \AliceKey =
          \AliceElement[{\AliceIndex[\One]}][{\AliceExp[\One]}]
          \cdots
          \AliceElement[{\AliceIndex[\AliceLength]}][{\AliceExp[\AliceLength]}]
          \in
          \GroupPresented{
            \AliceElement[\One],\ldots,\AliceElement[\AliceSize]
          }
          \subseteq\TheGroup.
        \)
      \item
        Bob chooses randomly a public set
        \(
          \SetOf{
            \BobElement[\One],\ldots,\BobElement[\BobSize]
          }
          \subset\TheGroup
        \)
        and a private key
        \(
          \BobKey =
          \BobElement[{\BobIndex[\One]}][{\BobExp[\One]}]
          \cdots
          \BobElement[{\BobIndex[\BobLength]}][{\BobExp[\BobLength]}]
          \in
          \GroupPresented{
            \BobElement[\One],\ldots,\BobElement[\BobSize]
          }
          \subseteq\TheGroup.
        \)
      \item
        Alice sends to Bob the $\BobSize$-tuple
        \(
          \SetOf{
            \AliceKey\BobElement[\One]\AliceKey[][-\One],\ldots,
            \AliceKey\BobElement[\BobSize]\AliceKey[][-\One]
          }.
        \)
      \item
        Bob sends to Alice the $\AliceSize$-tuple
        \(
          \SetOf{
            \BobKey\AliceElement[\One]\BobKey[][-\One],
            \ldots,
            \BobKey\AliceElement[\AliceSize]\BobKey[][-\One]
          }.
        \)
      \item
        The shared secret is the commutator
        \(
          [\AliceKey,\BobKey]
          =
          \AliceKey[][-\One]\BobKey[][-\One]\AliceKey\BobKey
          ,
        \)
        which both of them can compute.
    \end{enumerate}

    The security of this key-exchange protocol depends
    on how hard it is to solve the Simultaneous Conjugacy Search
    Problem in $\TheGroup$: given elements
    \(
      \TheGroupElement[\One],\ldots,\TheGroupElement[\TheLastIndex]
    \)
    and
    \(
      \AltGroupElement[\One],\ldots,\AltGroupElement[\TheLastIndex]
    \)
    in $\TheGroup$, find an element
    $\TheConjugator\in\TheGroup$ such that
    \(
      \TheGroupElement[\TheIndex]
      =
      \TheConjugator[][-\One] \AltGroupElement[\TheIndex]
      \TheConjugator
    \)
    provided it is known that such a conjugating element exists.

    Certain criteria on the choice of the platform group for a
    cryptosystem were given by Shpilrain \cite{Shpilrain:2004}.
    We note that $\BV$ satisfies those criteria. In
    Section~\ref{sec:complexity}, we have shown that
    computations in $\BV$ can be performed in polynomial time
    and that the word problem can also be solved in polynomial
    time. The group $\BV$ has a presentation with many short
    relations \cite{Brin:2006}. According to \cite{Shpilrain:2004},
    this might make it harder to mount length based attacks
    on $\BV$ (more on this below). Finally, both braid groups and
    Thompson's groups $\ThVee$ and $\ThEff$ are widely known,
    which makes the braided version $\BV$ ``marketable''.

    Both, braid groups and Thompson's
    group $\ThEff$ were investigated in the
    context of cryptography,
    see
      \cite{Dehornoy:2004},
      \cite{Mahlburg:2004},
      \cite{Shpilrain.Ushakov:2005}
    and references therein.
    In the remainder of this section, we shall compare $\BV$ to
    $\ThEff$ and the braid groups from a cryptographic point of
    view.

    The simultaneous conjugacy problem in $\ThEff$
    was solved by Kassabov and Matucci \cite{Kassabov.Matucci:2006}
    using the interpretation
    of elements of $\ThEff$ as piecewise linear functions. Such
    interpretation is not available for $\BV$.%

    The conjugacy search problem seems to be harder for $\BV$ than
    for braid groups. Efficient algorithms for solving the conjugacy
    problem in braid groups are based on associating a finite set
    (called summit set \cite{Garside:1969}, super summit set,
    and ultra summit set \cite{Elrifai.Morton:1994},
    \cite{Gebhardt:2005}) of conjugates to any braid
    $\TheBraid\in\BraidGroup[\TheRank]$. One should note that
    finiteness of the summit sets relies on the
    number of strands $\TheRank$ being fixed. Braids extracted
    from elements in $\BV$ can have an arbitrary number of strands,
    which makes it impossible to directly transfer to $\BV$
    strategies successful for braid groups.

    There are also known attacks on braid-group based crypto-systems
    using linear representations. Braid groups are known to be
    linear (\cite{Bigelow:2001}, \cite{Krammer:2002}), but more
    importantly, the Burau and colored Burau representations have
    small kernels and can be exploited. According to
    Theorem~\ref{thm:no_representations}, such attacks on $\BV$
    will not work.

    A very general approach, now known as \notion{length based attack},
    was described in \cite{Hughes.Tannenbaum:2002} and further
    developed in \cite{Garber.Kaplan.Teichner.Tsaban.Vishne:2006}.
    It relies on the existence of a good length function on the
    platform group, and can be used to solve arbitrary systems of
    equations over the group. The main idea is to use the length
    function to turn the system of equations into a problem in
    combinatorial optimization. We refer to
    \cite{Garber.Kaplan.Teichner.Tsaban.Vishne:2006},
    \cite{Ruinskiy.Shamir.Tsaban:2006}, and
    \cite{Myasnikov.Ushakov:2007a}
    for descriptions of length based attacks for the
    conjugacy search problem in braid groups and Thompson's group
    $\ThEff$. Length based attacks are most successful if randomly
    chosen subgroups of the platform group are generically free
    (see \cite{Myasnikov.Ushakov:2007b} for a detailed analysis).
    This is the case for braid groups
    \cite{Myasnikov.Osin:xxxx}. Both groups, $\ThVee$ and
    $\BV$ are known to have free subgroups. It is not known whether
    random subgroups of $\ThVee$ and $\BV$ are generically free.
    Thus, answers to the following questions will have an impact
    on the usability of $\BV$ for cryptography:
    \begin{question}
      What are generic subgroups of $\ThVee$ and $\BV$?
    \end{question}
    \begin{question}
      Does $\BV$ have a quotient with generically free subgroups?
    \end{question}

  \section*{References}
  
\end{document}